\DeclareSymbolFontAlphabet{\mathbb}{AMSb} 
\DeclareSymbolFontAlphabet{\mathbbl}{bbold}
\newtheorem{Satz}{Satz}[section]
\newtheorem{Lemma}[Satz]{Lemma}
\theoremstyle{definition}
\newtheorem{Definition}{Definition}
\newtheorem{Remark}{Remark}
\newtheorem{Theorem}[Satz]{Theorem}
\DeclareMathOperator{\Hom}{Hom}
\DeclareMathOperator{\Aut}{Aut}
\DeclareMathOperator{\Gal}{Gal}
\DeclareMathOperator{\End}{End}
\DeclareMathOperator{\Lie}{Lie}
\DeclareMathOperator{\Spec}{Spec}
\DeclareMathOperator{\Spa}{Spa}
\DeclareMathOperator{\Spf}{Spf}
\DeclareMathOperator{\Nilp}{Nilp}
\DeclareMathOperator{\PGL}{PGL}
\DeclareMathOperator{\GL}{GL}
\DeclareMathOperator{\Displ}{Displ}
\DeclareMathOperator{\Fil}{Fil}
\DeclareMathOperator{\colim}{colim}
\DeclareMathOperator{\AlgNilp}{AlgNilp}
\DeclareMathOperator{\rk}{rk}
\DeclareMathOperator{\coker}{coker}
\DeclareMathOperator{\grad}{grad}
\DeclareMathOperator{\Set}{Set}
\begin{document}
\begin{titlepage}
\begin{center}

\title{The universal special formal $\mathcal{O}_{D}$-module for $d=2$}
\author{Sebastian Bartling}
\maketitle
\end{center}

Abstract: A new proof of an old theorem of Drinfeld concerning the representability of the moduli problem of special formal $\mathcal{O}_{D}$-modules by Deligne's $p$-adic formal model of Drinfeld's upper half-plane is given for $d=2.$ The display associated to the universal object is constructed explicitly.

\tableofcontents
\end{titlepage}
\newpage
\section{Introduction}
One of the most widely studied objects in $p$-adic geometry is the non-archimedean analogue of the upper half-plane: 
$$
\Omega^{d}_{\mathbb{Q}_{p}}=\mathbb{P}^{d-1,\text{ad}}_{\mathbb{Q}_{p}}-\mathcal{H},
$$
where $\mathcal{H}$ is the union of all $\mathbb{Q}_{p}$-rational hyperplanes inside $\mathbb{P}^{d-1}_{\mathbb{Q}_{p}}.$ One source of fascination comes from the fact that the base-change $\Omega^{d}\times_{\Spa(\mathbb{Q}_{p})}\Spa(\breve{\mathbb{Q}}_{p})$\footnote{Recall that $\breve{\mathbb{Q}}_{p}$ denotes the completion of the maximal unramified extension of $\mathbb{Q}_{p}.$} has a tower of highly non-trivial finite étale coverings, which incodes Langlands (and Jacquet-Langlands) correspondences in both $\ell$-adic (\cite{FaltingsTraceOmega}, \cite{FaltingsARelationBetween}, \cite{FarguesL'IsoEntreLesTours}, \cite{HarrisDrinfeldSpace}, \cite{HarrisTaylor}) and $p$-adic local Langlands (\cite{ColmezetalDrinfeld1} for $d=2$).
\\
These covering spaces were constructed by Drinfeld in his seminal paper \cite{DrinfeldOmega}; an article which is both famous and infamous for it's importance and being very concise. To explain how Drinfeld goes about constructing these covering spaces, one needs actually to work with an integral model of the rigid-analytic variety $\Omega^{d}_{\mathbb{Q}_{p}}.$
\\
The following construction of this integral model is due to Deligne: Let $\mathcal{B}T(\PGL_{d}(\mathbb{Q}_{p}))$ be the Bruhat-Tits building of the group $\PGL_{d}(\mathbb{Q}_{p}).$ The set of vertices (=$0$-simplices) is given by homothety classes of $\mathbb{Z}_{p}$-lattices inside of $\mathbb{Q}_{p}^{d}.$ A collection of vertices $[\Lambda_{i_{0}}],...,[\Lambda_{i_{r}}]$ forms a simplex, if there are representatives $\Lambda_{i_{0}},...,\Lambda_{i_{r}},$ such that
$$
p\Lambda_{i_{r}}\subset \Lambda_{i_{0}} \subset ... \subset \Lambda_{i_{r}}.
$$
Now one associates to a simplex $\Delta$ inside of $\mathcal{B}T(\PGL_{d}(\mathbb{Q}_{p}))$ an affine $p$-adic formal scheme as the solution of the following moduli problem: fix a set of representatives $\Lambda_{i_{0}},...,\Lambda_{i_{r}}$ for the simplex $\Delta$ and let 
$$
\widehat{U}_{\Delta}\colon \text{Nilp}^{\text{op}}\rightarrow \text{Set}
$$
be the functor which sends a scheme $S$ on which $p$ is Zariski-locally nilpotent\footnote{In the following I write $\Nilp$ for the category of these schemes.} towards equivalence classes of triples $(\mathcal{L}_{i_{k}},\alpha_{i_{k}},\Pi_{k}),$ $0\leq k \leq r$ where $\mathcal{L}_{i_{k}}$ are line bundles on $S$, $\alpha\colon \Lambda_{i_{k}}\rightarrow \mathcal{L}_{i_{k}}$ are $\mathbb{Z}_{p}$-linear morphisms and $\Pi_{k}\colon \mathcal{L}_{i_{k}}\rightarrow \mathcal{L}_{i_{k+1}}$ are $\mathcal{O}_{S}$-linear morphisms. This data is required to give rise to a commutative diagram 
$$
\xymatrix{
p\Lambda_{i_{r}} \ar[r] \ar[d]^{\alpha_{i_{r}}/p} & \Lambda_{i_{0}} \ar[r] \ar[d]^{\alpha_{i_{0}}} ... &  \Lambda_{i_{r}} \ar[d]^{\alpha_{i_{r}}} \\
\mathcal{L}_{i_{r}} \ar[r]^{\Pi_{r-1}} & \mathcal{L}_{i_{0}} \ar[r]^{\Pi_{i_{0}}} ... & \mathcal{L}_{i_{r}}.
}
$$
Furthermore, one requires that the following condition is satisfied: 
\begin{enumerate}
\item[(Deligne):] For $m\in \Lambda_{i_{k+1}}-\Lambda_{i_{k}}$ (resp. $m\in \Lambda_{i_{0}}-p\Lambda_{i_{r}}$), the section $\alpha_{i_{k+1}}(m)\in \mathcal{L}_{i_{k+1}}$ (resp. $\alpha_{i_{0}}(m)\in \mathcal{L}_{i_{0}}$) does not vanish on $S.$
\end{enumerate}
One checks that this functor in fact does not depend on the choice of the representatives which were implicit in the construction. Then it is not hard to see that $\widehat{U}_{\Delta}$ is representable by an affine $p$-adic formal scheme $\Spf(A_{\Delta}).$ For example, if $\Delta$ is a maximal simplex, then one has (non-canonically)
$$
A_{\Delta}\simeq \mathbb{Z}_{p}\lbrace X_{1},...,X_{d}, u^{-1} \rbrace/(X_{1}\cdot ...\cdot X_{d}-p),
$$
where 
$$
u=\prod_{i\in \mathbb{Z}/d\mathbb{Z},\lambda\in \mathbb{F}_{p}^{d-1}}u_{i,\lambda}
$$
with
$$
u_{i,\lambda}=1+\lambda_{1}X_{i-1}+\lambda_{2}X_{i-1}X_{i-2}+...\lambda_{d-1}X_{i-1}X_{i-2}\cdot \cdot \cdot X_{1+i-d}.
$$
If $\Delta^{\prime}$ is a subsimplex of $\Delta,$ then the natural morphism $\widehat{U}_{\Delta^{\prime}}\rightarrow \widehat{U}_{\Delta}$ is an open immersion and one defines
$$
\widehat{\Omega}^{d}_{\mathbb{Z}_{p}}=\underset{\Delta\in \mathcal{B}T(\PGL_{d}(\mathbb{Q}_{p}))} \colim \widehat{U}_{\Delta}.
$$
This is a $p$-adic semi-stable formal scheme which is indeed a formal model of $\Omega^{d}_{\mathbb{Q}_{p}}.$ 
\\
Drinfeld's key observation was that $\widehat{\Omega}^{d}_{\breve{\mathbb{Z}}_{p}}$ admits a moduli interpretation. To introduce this moduli problem one needs a bit more set-up: let $D$ be a central simple algebra over $\mathbb{Q}_{p}$ of Brauer-invariant $1/d,$ with ring of integers $\mathcal{O}_{D}$ and fixed uniformizer $\Pi\in \mathcal{O}_{D}.$ Denoting by $K_{d}/\mathbb{Q}_{p}$ the degree $d$ unramified extension with ring of integers $\mathcal{O}_{K_{d}}$  and choosing an embedding $K_{d}\hookrightarrow D,$ one gets a presentation of $\mathcal{O}_{D}$ as a non-commutative polynomial ring over $\mathcal{O}_{K_{d}}$ in the variabel $\Pi.$ 
\begin{Definition}\label{Def SFD}
Let $S\in \Nilp.$ A special formal $\mathcal{O}_{D}$-module over $S$ is the datum of a pair $(X,\iota),$ where $X$ is a $p$-divisible group over $S$ and $\iota$ is a $\mathbb{Z}_{p}$-algebra homomorphism
$$
\iota\colon \mathcal{O}_{D}\rightarrow \End(X),
$$
such that the following axiom is satisfied:
\begin{enumerate}
\item[(Drinfeld):] at all geometric points of $S$ the action of $\mathcal{O}_{K_{d}}$ splits the Lie-algebra of $X$ into a sum of $d$ distinct characters.
\end{enumerate}
\end{Definition}
Fix a special formal $\mathcal{O}_{D}$-module $\mathbb{X}$ over $k=\overline{\mathbb{F}}_{p},$ which is of height $d^{2}$ and dimension $d.$ It is an important fact that up to $\mathcal{O}_{D}$-linear isogeny there is precesily one such special formal $\mathcal{O}_{D}$-module over $k.$
\\
Then Drinfeld considered the following moduli problem, defined over $\breve{\mathbb{Z}}_{p}=W(k),$
$$
\mathcal{D}\colon \Nilp_{\breve{\mathbb{Z}}_{p}}^{\text{op}}\rightarrow \Set
$$
sending $S\in \Nilp_{\breve{\mathbb{Z}}_{p}}$ towards pairs $(X,\rho)$ up to equivalence, where $X$ is a special formal $\mathcal{O}_{D}$-module over $S$ and
$$
\rho\colon \mathbb{X}\times_{\Spec(k)}\overline{S}\dashrightarrow X\times_{S} \overline{S}
$$
is a $\mathcal{O}_{D}$-linear quasi-isogeny of height $0,$ which is defined over $\overline{S}=V(p)\subset S.$
\begin{Theorem}{(Drinfeld)}
\\
The functor $\mathcal{D}$ is representable by the formal scheme $\widehat{\Omega}^{d}_{\breve{\mathbb{Z}}_{p}}.$
\end{Theorem}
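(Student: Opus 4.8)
The plan is to exhibit mutually inverse natural transformations between $\mathcal{D}$ and the functor $S\mapsto\Hom_{\breve{\mathbb{Z}}_p}(S,\widehat{\Omega}^d_{\breve{\mathbb{Z}}_p})$; this simultaneously establishes representability and identifies the representing object, and its core is an explicit presentation of the universal $p$-divisible group by a display. The first step is Dieudonn\'e-theoretic rigidification: special formal $\mathcal{O}_D$-modules of height $d^2$ and dimension $d$ are formal, so over $S\in\Nilp_{\breve{\mathbb{Z}}_p}$ Zink's theory of displays lets one replace $(X,\iota)$ by a display $\mathcal{P}$ over $\mathcal{O}_S$ carrying an action of $\mathcal{O}_D$. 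The unramified subring $\mathcal{O}_{K_d}\subset\mathcal{O}_D$ then splits $\mathcal{P}$ into eigencomponents indexed by $\mathbb{Z}/d\mathbb{Z}$, the uniformizer $\Pi$ permutes these cyclically with $\Pi^d$ acting as $p$, and the Drinfeld condition on the Lie algebra says exactly that each of the $d$ Hodge summands is a line bundle. This pins down the numerical type of the display and reduces all of the data to that of a single component together with the $d$ transition maps.

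For the transformation $\mathcal{D}\to\widehat{\Omega}^d_{\breve{\mathbb{Z}}_p}$: given $(X,\rho)$ over $S$, the height-$0$ quasi-isogeny $\rho$ identifies the rational Dieudonn\'e crystal of $X$ along $\overline{S}$ with the fixed isocrystal $N=N_{\mathbb{X}}$, so the $\mathcal{O}_D$-stable chain of Dieudonn\'e lattices read off from $\mathcal{P}$ lives inside $N$. Decomposing $N$ under $\mathcal{O}_{K_d}$ and extracting, component by component, the submodules generated by the powers of $\Pi$ together with the Hodge filtration, one obtains Zariski-locally on $S$ line bundles $\mathcal{L}_{i_0},\dots,\mathcal{L}_{i_r}$ with $\mathbb{Z}_p$-linear maps $\alpha_{i_k}$ from the reference lattices $\Lambda_{i_k}\subset\mathbb{Q}_p^d$ and $\mathcal{O}_S$-linear maps $\Pi_k$ assembling into the commutative diagram required by Deligne's functor; the exactness built into a display — equivalently, the Drinfeld condition — is precisely the "(Deligne)" non-vanishing condition. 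One checks this is independent of all choices and defines a morphism $S\to\widehat{U}_\Delta\subset\widehat{\Omega}^d_{\breve{\mathbb{Z}}_p}$, functorial in $S$.

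For the reverse transformation — the genuinely new input — the idea is to write down, for each simplex $\Delta$, an explicit display $\mathcal{P}_\Delta$ with $\mathcal{O}_D$-action over $A_\Delta$ whose structure matrices are given in the coordinates $X_1,\dots,X_d,u$, so that the relation $\Pi^d=X_1\cdots X_d=p$ is visibly realized; to verify directly that $\mathcal{P}_\Delta$ satisfies the Drinfeld axiom at every geometric point of $\Spf(A_\Delta)$; to check that for $\Delta'\subset\Delta$ the display $\mathcal{P}_\Delta$ restricts canonically to $\mathcal{P}_{\Delta'}$, so that the $\mathcal{P}_\Delta$ glue to a single display over $\widehat{\Omega}^d_{\breve{\mathbb{Z}}_p}$; and finally to produce the quasi-isogeny $\rho$ over the special fibre by comparing the reduction modulo $p$ of $\mathcal{P}_\Delta$ with a fixed display of $\mathbb{X}$, which may be taken to be $\mathcal{P}_{\Delta_0}\otimes_{\breve{\mathbb{Z}}_p}k$ at a base vertex $\Delta_0$. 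For $d=2$ this becomes completely explicit: $D$ is the quaternion division algebra, the displays are $4\times4$ (equivalently $2\times2$ on each $\mathcal{O}_{K_2}$-eigencomponent), and $\mathcal{B}T(\PGL_2(\mathbb{Q}_p))$ is a tree, so the gluing takes place only along edges and vertices and is manageable.

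Finally one shows the two transformations are mutually inverse. By the rigidity of quasi-isogenies, $\mathcal{D}$ is a sheaf on which a morphism is determined by its reduction modulo $p$, and since both functors are locally formally of finite type over $\breve{\mathbb{Z}}_p$ it suffices to check (i) bijectivity on $k$-points and (ii) agreement of deformation theories. Point (i) is Drinfeld's original combinatorial computation: classifying $\mathcal{O}_D$-stable, $\Pi$-balanced lattice chains in $N_{\mathbb{X}}$ up to the relevant equivalence reproduces exactly the simplices of $\mathcal{B}T(\PGL_d(\mathbb{Q}_p))$ and their $k$-points. For (ii), on the $\mathcal{D}$-side Grothendieck--Messing theory (in the guise of the crystalline deformation theory of displays) describes lifts of $(X,\iota,\rho)$ by lifts of the $\mathcal{O}_D$-stable Hodge filtration, and this must be matched term by term with the tangent and obstruction behaviour of $\widehat{\Omega}^d_{\breve{\mathbb{Z}}_p}$ read off from $X_1\cdots X_d=p$; the Drinfeld condition is exactly what forces the Hodge filtration to vary in this semistable family rather than a smooth one. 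I expect the main obstacle to be precisely this last comparison together with the verification that $\mathcal{P}_\Delta$ satisfies the Drinfeld axiom along the whole special fibre, including the crossing locus, and glues correctly — that is, the bookkeeping imposed by the $\mathcal{O}_{K_d}$-decomposition and the cyclic $\Pi$-action; the remaining points are either formal or finite computations specific to $d=2$.
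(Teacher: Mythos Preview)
Your outline is broadly correct and hits the same three pillars as the paper --- translation to special $\mathcal{O}_D$-displays, explicit construction of the universal display, and a Grothendieck--Messing/deformation argument to conclude --- but the architecture differs from the paper's in two substantive ways.

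First, you propose to build \emph{both} natural transformations and show they are mutually inverse. The paper constructs only the map $\Upsilon\colon \widehat{\Omega}^2_{\breve{\mathbb{Z}}_p}\to\mathcal{D}$; Drinfeld's map in the other direction appears \emph{only on $k$-points}, as the map $\Phi_k\colon \mathcal{D}(k)\to\widehat{\Omega}^2(k)$, and is used solely to motivate the explicit displays (so that $\Upsilon(k)$ is by construction its inverse). The isomorphism is then obtained not by exhibiting an inverse but by reducing to the special fibre, writing $\overline{\Omega}^2_k$ and $\overline{\mathcal{D}}$ as colimits of projective $k$-schemes, and proving $\Upsilon_0$ is \'etale via a direct tangent-space computation at each $k$-point (distinguishing one versus two critical indices). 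Your two-sided approach is legitimate, but carrying Drinfeld's map as a natural transformation on arbitrary bases is extra work that the paper deliberately avoids.

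Second, you plan to write the display $\mathcal{P}_\Delta$ directly over the integral ring $A_\Delta$ with explicit structure matrices in $X_1,\dots,X_d$. The paper does \emph{not} do this: it writes down explicit displays $\mathcal{P}^{(0)},\mathcal{P}^{(1)}$ only over the two components of the \emph{special fibre} $\Spec(k[X,\tfrac{1}{1-X^{p-1}}])$ and $\Spec(k[Y,\tfrac{1}{1-Y^{p-1}}])$, glues them by Ferrand gluing along the intersection point $X=Y=0$ (where both indices become critical), and only \emph{then} lifts to $A_{\Delta,n}$ by Grothendieck--Messing, i.e.\ by specifying a lift of the $\mathcal{O}_D$-stable Hodge filtration rather than integral structure matrices. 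The paper remarks that the ``extension step'' (making the displays exist over the locus where the variable vanishes) is automatic for $d=2$ and unknown to the author for $d>2$, so your promise of explicit integral formulas is more ambitious than what the paper achieves; it is not wrong in principle, but you should be aware that this is the step the paper sidesteps.
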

This is \cite[Theorem in Paragraph 2, p. 109]{DrinfeldOmega}.
\\
The system of finite étale coverings of $\Omega^{d}_{\breve{\mathbb{Q}}_{p}}$ is then constructed as the generic fiber of the $p^{n}$-torsion points of the universal object over $\widehat{\Omega}^{d}_{\breve{\mathbb{Z}}_{p}}.$
\\
The basic aim of this note is to give a new proof of this theorem for $d=2$ by explicitly constructing the display of the universal object.
\\
Let me outline the construction. The idea is to construct first a special formal $\mathcal{O}_{D}$-module, together with a $\mathcal{O}_{D}$-linear quasi isogeny of height $0$ towards the fixed framing object $\mathbb{X},$ over the special fiber $$\overline{\Omega}^{2}_{k}:=\widehat{\Omega}^{d}_{\breve{\mathbb{Z}}_{p}}\times_{\Spf(\breve{\mathbb{Z}}_{p})} \Spec(k).$$ This morphism
$$
\Upsilon_{0}\colon \overline{\Omega}^{2}_{k}\rightarrow \overline{\mathcal{D}}
$$
should be constructed in such a way that on $k$-rational points one automatically gets a bijection. In section \ref{Section How to find displays} below, I explain in detail how this can be done; roughly, the idea is to construct explicitly an inverse to the morphism 
$$\Phi_{k}\colon \mathcal{D}(k)\rightarrow \widehat{\Omega}^{2}_{\breve{\mathbb{Z}}_{p}}(k)$$
constructed by Drinfeld. The display of the inverse image of a point $x\in \overline{U}_{\lbrace \Lambda \rbrace}(k)$ will depend on a scalar $\lambda\in k,$ which is not $\mathbb{F}_{p}$-rational. Letting this scalar vary one obtains a display over $\overline{U}_{\lbrace \Lambda \rbrace}$ with exactly one critical index. If one has a simplex
$$
\Delta:\! p\Lambda\subset \Lambda^{\prime} \subset \Lambda
$$
one now wants to glue the two families over $\overline{U}_{\lbrace \Lambda \rbrace}$ and $\overline{U}_{\lbrace \Lambda^{\prime} \rbrace}.$ For this, one observes that the previously constructed families also extend over the locus where the variable vanishes and over this locus these two families are identified. This extension step for $d=2$ is automatic and I don't know how to do it for $d>2.$ Afterwards one glues the two previously constructed families along the closed point of intersection to obtain a family of displays over $\overline{U}_{\Delta}.$
At this point one has constructed the desired morphism
$$
\Upsilon_{0}\colon \overline{\Omega}^{2}_{k}\rightarrow \overline{\mathcal{D}}.
$$
It was insured that $\Upsilon_{0}(k)$ is a bijection. Afterwards one uses Grothendieck-Messing deformation theory to lift the previously constructed family over $\overline{\Omega}^{2}_{k}$ to all $p$-adic thickenings. To conclude that this morphism
$$
\Upsilon\colon \widehat{\Omega}^{2}_{\breve{\mathbb{Z}}_{p}}\rightarrow \mathcal{D}
$$
is an isomorphism one argues as Drinfeld does by checking that it is enough to show that $\Upsilon_{0}$ is an isomorphism; for this one just has to convince oneself that this morphism is étale which is handled again by using Grothendieck-Messing deformation theory.
\subsection{Overview}
Here is a quick overview of the sections below: In the first section I set up some notations. In the second section I explain how to reformulate the moduli problem $\mathcal{D}$ completly in terms of displays; furthermore, basics of the deformation theory of special $\mathcal{O}_{D}$-displays are discussed. The third section explains in depth how to find the right displays over the special fiber. Afterwards, in section four, the family of special $\mathcal{O}_{D}$-displays over $\widehat{\Omega}^{2}_{\breve{\mathbb{Z}}_{p}}$ is constructed. After some reductions in section five, in the final section it is checked that the natural transformation $\Upsilon$ is really an isomorphism.
\subsection{Acknowledgments}
This project was given to me by Thomas Zink many years ago; most of the ideas here are clearly due to him. I want to thank him for the significant role he played in my mathematical upbringing. Furthermore, I thank Johannes Anschütz for giving me the possibility to talk about this project in Munich.
\\
This is part of the author's PhD thesis under the direction of Laurent Fargues. The author received financial support from the ERC Advanced Grant 742608 GeoLocLang.
\subsection{Notations}
I will work over $K=\mathbb{Q}_{p},$ $d\geq 2$ is some integer, which will mostly be just $d=2,$ $\overline{\mathbb{Q}}_{p}$ is a fixed algebraic closure, inducing an algebraic closure $k=\overline{\mathbb{F}}_{p}$ of $\mathbb{F}_{p},$ which I use to built the maximal unramified extension $W(\overline{\mathbb{F}}_{p})[1/p]=\breve{\mathbb{Q}}_{p},$ contained inside of $\overline{\mathbb{Q}}_{p}.$ Sometimes I will write $W=W(\overline{\mathbb{F}}_{p})$ and $\sigma\in \Aut(W)$ always denotes the Wittvector-Frobenius on $W.$ Let $\mathbb{F}_{p^{d}}$ be the degree $d$ extension of $\mathbb{F}_{p},$ contained inside of $\overline{\mathbb{F}}_{p}$ and $K_{d}=W(\mathbb{F}_{	p^{d}})[1/p]$ denotes the unramified extension of degree $d$ of $\mathbb{Q}_{p},$ with Frobenius-automorphism denoted by $\tau_{d}\in \Gal(K_{d}/\mathbb{Q}_{p}),$ and which comes by the set-up with a canonical embedding $$\psi_{0}\colon K_{d}\hookrightarrow \breve{\mathbb{Q}}_{p} \hookrightarrow \overline{\mathbb{Q}}_{p},$$ $D$ is a central simple $\mathbb{Q}_{p}$-algebra of Brauer-invariant $1/d,$ with ring of integers $\mathcal{O}_{D}$ and I fix throughout a uniformizer $\Pi\in \mathcal{O}_{D},$ which then satisfies $\Pi^{d}=p.$ Choosing an embedding $\mathcal{O}_{K_{d}}\subseteq \mathcal{O}_{D},$ one gets the presentation $\mathcal{O}_{D}=\mathcal{O}_{K_{d}}[\Pi]$ as the non-commutative polynomial ring in $\Pi$ with the conditions that $\Pi^{d}=p$ and $\Pi\cdot x=\tau_{d}(x)\cdot \Pi.$ A $p$-adic ring will always mean a $p$-adically complete and seperated ring. If $R$ is some $\mathbb{Z}_{p}$-algebra, I write $\Nilp_{R}$ for the category of $R$-schemes on which Zariski-locally $p$ is nilpotent.
\newpage
\section{Reformulating the moduli problem}
The aim of this section is to reformulate Drinfeld's moduli problem of special formal $\mathcal{O}_{D}$-modules using Zink's theory of displays.
\\
Let $R\in \Nilp$ and consider a special formal $\mathcal{O}_{D}$-module over $\Spec(R)$ as in Definition \ref{Def SFD}. Let me start with the following comments concerning this concept:
\begin{Remark}\label{Remark zu SFD}
\begin{enumerate}
\item[(i):]
In general, the height of a special formal $\mathcal{O}_{D}$-module over $S$ is locally constant and given by a multiple of $d^{2}:$ To see this, let $S$ be connected, so that the height is constant. Let $\bar{x}\colon\Spec(k)\rightarrow S$ be a geometric point and since $\text{ht}(X)=\text{ht}(\bar{x}^{*}(X)),$ one has to determine the height of the pullback $\bar{x}^{*}(X).$ Here one can use Dieudonné-theory to replace $\bar{x}^{*}(X)$ by a tuple $(M,V,F,\Pi,\mathbb{Z}/d\mathbb{Z}-\text{grad})$ as in Lemma \ref{Beschreibung von SFD via Displays}. Since $\Pi\colon M_{i}\rightarrow M_{i+1}$ is an isomorphism after inverting $p,$ one sees that $\rk_{W}(M_{i})=\rk_{W}(M_{i+1})$ for all $i.$ Let $r=\rk_{W}(M_{0})$ be this common value of the rank. By the special-condition, it follows that $\lg_{W}(\coker(\Pi_{i}))=\lg_{W}(\coker(\Pi_{i+1}))$ for all $i.$ Then one gets, using that $\Pi^{d}=\cdot p$ and that $\Pi$ is thus injective,
\begin{align*}
r = & \dim_{k}[M_{0}:\Pi^{d}M_{0}] \\
 = & \sum_{i=0}^{d-1}\lg_{W}(\coker(\Pi_{i})) \\
 = & d\cdot \lg_{W}(\coker(\Pi_{0})).
\end{align*}
In total, one sees that $\rk_{W}(M)=\sum_{i=0}^{d-1}\rk_{W}M_{i}=d\cdot \rk_{W}(M_{0})=d^{2}\cdot \lg_{W}(\coker(\Pi_{0})).$
\item[(ii):] I recall the important fact that over $k$ there is up to $\mathcal{O}_{D}$-linear isogeny only one special formal $\mathcal{O}_{D}$-module of height $d^{2}$ and dimension $d$ (c.f. \cite[Lemma 3.60]{RZ}). This is the key reason why the formal scheme representing the Drinfeld moduli problem is $p$-adic and the Drinfeld case is basically the only case of RZ-spaces where such a phenomen can be observed.
\item[(iii):] A special formal $\mathcal{O}_{D}$-module of height $d^{2}$ and dimension $d$  is automatically a formal $p$-divisible group, i.e. at all geometric points of $S$ it has no étale part. Again, one may check this at a geometric point $\bar{x}\colon \Spec(k)\rightarrow S$ and use Dieudonné-theory there to replace $\bar{x}^{*}(X)$ by a tuple $(M,F,V,\Pi,\mathbb{Z}/d\mathbb{Z}-\text{grad})$ as before. Then it suffices to see that $V$ is topologically nilpotent on $M,$ i.e. it suffices that $V$ is nilpotent on $M/pM=M/\Pi^{d}M.$ Now $M/\Pi^{d}M$ has a descending filtration given by $\Pi^{i}M/\Pi^{d}M,$ $i=1,...,d-1,$ whose graded pieces are always given by $M/\Pi M.$ It follows inductively then that it suffices to show that $V$ acts nilpotently on $M/\Pi M.$ But this follows from the fact that there exists at least one critical index: for all $i,$ the morphism $$\bar{V}\colon \coker(\Pi_{i})\rightarrow \coker(\Pi_{i+1})$$ is either an isomorphism or the zero morphism (here I use the condition on the height: it implies by the previous remark (i) above that $\lg_{W}(M_{i+1}/\Pi M_{i})=1$ ); if it were always an isomorphism, then also $\bar{V}\colon M/\Pi M\rightarrow M/ \Pi M$ would be an isomorphism, so that also $\bar{\Pi}\colon \Lie(X)\rightarrow \Lie(X)$ would be an isomorphism, which is wrong since one finds a critical index. Here I recall that an index $i\in \mathbb{Z}/d\mathbb{Z}$ is called critical if the induced morphism
$$
\Pi\colon \Lie(X)_{i}\rightarrow \Lie(X)_{i+1}
$$
is the zero morphism.
\item[(iv):] One can formulate condition (a) equivalently as follows: étale locally on $S$ (here it suffices to take the étale covering $S\times_{\Spec(\mathbb{Z}_{p})}\Spec(\mathcal{O}_{K_{d}}))\rightarrow S$), if one looks at the eigenspace decomposition
$$
\Lie(X)=\bigoplus_{\psi\colon \mathcal{O}_{K_{d}}\hookrightarrow \mathcal{O}_{S}} (\Lie(X))_{\psi},
$$ 
where $\mathcal{O}_{K_{d}}\subset \mathcal{O}_{D}$ acts on $\Lie(X)_{\psi}$ via $\psi,$ then all $(\Lie(X))_{\psi}$ are line-bundles over $S.$
\end{enumerate}
\end{Remark}
Part (iii) of the previous remark opens the way to describe a special formal $\mathcal{O}_{D}$-module via Zink's theory of displays (\cite{ZinkDisplay}); amplified by a theorem of Lau (\cite{LauInventiones}), which says that Zink's classification of formal $p$-divisible groups by nilpotent displays extends to all $p$-adic rings. Using this, the next lemma is rather formal and well-known:
\begin{Lemma}\label{Beschreibung von SFD via Displays}
Let $\Spec(R)\in \Nilp_{\breve{\mathbb{Z}}_{p}}.$Then there is an equivalence of categories between special formal $\mathcal{O}_{D}$-modules of height $d^{2}$ and dimension $d$ over $\Spec(R)$ and the category of triples $(\mathcal{P},\Pi,\mathbb{Z}/d\mathbb{Z}-\grad),$ where $\mathcal{P}$ is a display over $R$, which is locally free on $R$ of rank $d^{2},$ $\Pi\in \End_{\Displ}(\mathcal{P})$ and the datum of a $\mathbb{Z}/d\mathbb{Z}$-grading $P=\bigoplus_{i\in \mathbb{Z}/d\mathbb{Z}} P_{i},$ $Q=\bigoplus_{i\in \mathbb{Z}/d\mathbb{Z}} Q_{i},$ such that $Q_{i}=P_{i}\cap Q,$ $\deg(F)=\deg(\dot{F})=+1,$ $\deg(\Pi)=+1, $ $\Pi^{d}=\cdot p,$ all $P_{i}/Q_{i}$ are locally free of rank $1$ over $R.$\footnote{Note that such a triple $(\mathcal{P},\Pi,\mathbb{Z}/d\mathbb{Z}-\grad)$ is automatically a nilpotent display: at all geometric points of $\Spec(R),$ the display identifies with a Dieudonné-module and one can argue as in Remark \ref{Remark zu SFD} (iii).}
\end{Lemma}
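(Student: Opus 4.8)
The plan is to deduce the statement from Lau's extension \cite{LauInventiones} of Zink's classification \cite{ZinkDisplay} and to transport the $\mathcal{O}_{D}$-structure through it. We may work Zariski-locally on $\Spec(R)$ and hence assume $p$ is nilpotent in $R$. By Remark \ref{Remark zu SFD}(iii) a special formal $\mathcal{O}_{D}$-module $(X,\iota)$ of height $d^{2}$ and dimension $d$ over $R$ is a formal $p$-divisible group, so there is a functorial equivalence $X\mapsto\mathcal{P}_{X}=(P,Q,F,\dot F)$ with the category of nilpotent displays over $R$, compatible with base change, with $\Hom$-groups, and with the Lie algebra in the sense that $\Lie(X)\cong P/Q$ as $R$-modules (in Zink's normalization), and under which $\mathrm{ht}(X)=\rk_{W(R)}(P)$ and $\dim(X)=\rk_{R}(P/Q)$. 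In particular $\mathcal{P}_{X}$ is locally free of rank $d^{2}$, and applying the equivalence to $\iota$ gives a ring homomorphism $\mathcal{O}_{D}\to\End_{\Displ}(\mathcal{P}_{X})$.

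Now I would read off the data along the presentation $\mathcal{O}_{D}=\mathcal{O}_{K_{d}}[\Pi]$. The image of the uniformizer is an element $\Pi\in\End_{\Displ}(\mathcal{P}_{X})$, i.e. a $W(R)$-linear endomorphism of $P$ preserving $Q$ and commuting with $F$ and $\dot F$, satisfying $\Pi^{d}=\cdot p$ because $\Pi^{d}=p$ in $\mathcal{O}_{D}$. For the grading one uses, as is standard in the theory of displays with unramified coefficients, that $\mathcal{O}_{K_{d}}$ is unramified over $\mathbb{Z}_{p}$ and $R$ is a $\breve{\mathbb{Z}}_{p}$-algebra: the structure map lifts canonically to a Frobenius-equivariant homomorphism $\mathcal{O}_{K_{d}}\to W(R)$, inducing an isomorphism $\mathcal{O}_{K_{d}}\otimes_{\mathbb{Z}_{p}}W(R)\cong\prod_{i\in\mathbb{Z}/d\mathbb{Z}}W(R)$ whose orthogonal idempotents $e_{i}$ are cyclically permuted by $\sigma$, say $\sigma(e_{i})=e_{i+1}$. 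Setting $P_{i}=e_{i}P$ and $Q_{i}=e_{i}Q=P_{i}\cap Q$ gives the $\mathbb{Z}/d\mathbb{Z}$-grading; the $\sigma$-semilinearity of $F$ and $\dot F$ forces $\deg(F)=\deg(\dot F)=+1$, and the relation $\Pi a=\tau_{d}(a)\Pi$ in $\mathcal{O}_{D}$ forces $\Pi(P_{i})\subseteq P_{i+1}$, i.e. $\deg(\Pi)=+1$. Finally $P_{i}/Q_{i}=e_{i}(P/Q)=e_{i}\Lie(X)$ is exactly the $\psi_{i}$-eigenspace of $\Lie(X)$ under $\mathcal{O}_{K_{d}}$, so by Remark \ref{Remark zu SFD}(iv) — and since over the $\breve{\mathbb{Z}}_{p}$-algebra $R$ no further étale localization is needed — the Drinfeld axiom for $(X,\iota)$ translates precisely into: each $P_{i}/Q_{i}$ is locally free of rank $1$. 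This produces the functor from special formal $\mathcal{O}_{D}$-modules to triples.

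For the inverse functor, starting from a triple $(\mathcal{P},\Pi,\mathbb{Z}/d\mathbb{Z}-\grad)$ I would first note it is a nilpotent display: as in the footnote one checks this at geometric points of $\Spec(R)$, where $\mathcal{P}$ becomes a Dieudonné module, $p=0$, so $\Pi^{d}=0$ on $\Lie$, hence $\Pi$ acts nilpotently there, and then the argument of Remark \ref{Remark zu SFD}(iii) shows $V$ is topologically nilpotent. Applying Lau's equivalence gives a formal $p$-divisible group $X$; the idempotents $e_{i}$ and $\Pi$ define a ring homomorphism $\mathcal{O}_{D}=\mathcal{O}_{K_{d}}[\Pi]\to\End(X)$ — the relations $\Pi a=\tau_{d}(a)\Pi$ and $\Pi^{d}=p$ being precisely $\deg(\Pi)=+1$ and $\Pi^{d}=\cdot p$ — and the hypotheses $\rk_{W(R)}(P)=d^{2}$ and $\rk_{R}(P_{i}/Q_{i})=1$ give $\mathrm{ht}(X)=d^{2}$, $\dim(X)=d$ and the Drinfeld axiom. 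These two constructions are mutually quasi-inverse, and on morphisms the correspondence is an equivalence because Lau's functor is fully faithful and an $\mathcal{O}_{D}$-linear homomorphism of special formal $\mathcal{O}_{D}$-modules is the same datum as a homomorphism of displays commuting with $\Pi$ and with the $e_{i}$, i.e. one that is graded and $\Pi$-equivariant. The only genuinely delicate points are the bookkeeping of the $\mathbb{Z}/d\mathbb{Z}$-grading — arranging that $\sigma$ shifts the idempotents by $+1$, which is what pins down the degrees of $F$, $\dot F$ and $\Pi$ — and quoting the correct normalization of the Zink/Lau functor so that height, dimension and Lie algebra translate as claimed; everything else is formal, as the lemma indicates.
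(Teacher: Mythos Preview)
Your proof is correct and follows essentially the same route as the paper: you invoke the Zink--Lau equivalence between formal $p$-divisible groups and nilpotent displays, transport $\iota$ through full faithfulness, produce the $\mathbb{Z}/d\mathbb{Z}$-grading from the idempotent decomposition $\mathcal{O}_{K_{d}}\otimes_{\mathbb{Z}_{p}}W(R)\cong\prod_{i}W(R)$ (the paper phrases this equivalently via eigenspaces for the characters $\psi_{i}$ and the Cartier morphism $\breve{\mathbb{Z}}_{p}\to W(\breve{\mathbb{Z}}_{p})$), and read off the degree conditions and the rank-one condition on $P_{i}/Q_{i}$ from $\sigma$-semilinearity, the relation $\Pi a=\tau_{d}(a)\Pi$, and the identification $P/Q\cong\Lie(X)$. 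Your treatment of the inverse functor and of morphisms is slightly more explicit than the paper's, but there is no substantive difference.
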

\begin{proof}
Recall that I fixed one embedding $\psi_{0}\colon K_{d}\hookrightarrow \overline{\mathbb{Q}}_{p},$ so that one has $$\Hom(K_{d},\overline{\mathbb{Q}}_{p})\simeq \lbrace \psi_{0}\circ \tau_{d}^{-i}\colon i\in \mathbb{Z}/d\mathbb{Z} \rbrace.$$ Let me denote $\psi_{i}=\psi_{0}\circ \tau_{d}^{-i}.$ Using the Cartier-morphism $\triangle\colon \breve{\mathbb{Z}}_{p}\rightarrow W(\breve{\mathbb{Z}}_{p}),$ one can then give $W(R)$ the structure of a $\mathcal{O}_{K_{d}}$-algebra via $\psi_{i},$ i.e. one considers the composition
$$
\xymatrix{
\mathcal{O}_{K_{d}} \ar[r]^{\psi_{i}} & \breve{\mathbb{Z}}_{p} \ar[r] & W(\breve{\mathbb{Z}}_{p}) \ar[r] & W(R),
}
$$
which I will again denote by $\psi_{i}.$ Since $R$ is a $\breve{\mathbb{Z}}_{p}$-algebra, for a given special formal $\mathcal{O}_{D}$-module $(X,\iota)$ the $\mathcal{O}_{S}$-module $\Lie(X)$ already splits under the action of $\mathcal{O}_{K_{d}}$ into eigenspaces which are line-bundles (i.e. one does not need to pass to any étale covering of $\Spec(R)$). Now let $\mathcal{P}=(P,Q,F,\dot{F})$ be the nilpotent display over $R$ associated to the $p$-divisible group $X$ over $\Spec(R).$ By the full faithfullness part of the equivalence of categories between nilpotent displays and formal $p$-divisible groups, one gets a $\mathbb{Z}_{p}$-algebra homomorphism
$$
\iota\colon \mathcal{O}_{D}\rightarrow \End_{\Displ}(\mathcal{P}).
$$
The $\mathbb{Z}/d\mathbb{Z}$-grading is then simply defined as follows: 
$$
P_{i}=\lbrace x\in P\colon \iota(a)(x)=\psi_{i}(a)\cdot x \text{ for all }a\in \mathcal{O}_{K_{d}} \rbrace,
$$
and
$$
Q_{i}=\lbrace y\in Q\colon \iota(a)(y)=\psi_{i}(a)\cdot y \text{ for all }a\in \mathcal{O}_{K_{d}} \rbrace.
$$
Then one has that $P=\bigoplus_{i\in \mathbb{Z}/d\mathbb{Z}} P_{i},$ because it is the decomposition induced on $P$ by the isomorphism $W(R)\otimes_{\mathbb{Z}_{p}} \mathcal{O}_{K_{d}}\simeq \prod_{i\in \mathbb{Z}/d\mathbb{Z}} (W(R))_{\psi_{i}},$ and similiarly for $Q$. The datum of $\Pi\in \End_{\Displ}(\mathcal{P})$ is then simply $\iota(\Pi)$ and the condition $\deg(F)=\deg(\dot{F})=+1$ follows, since these are $F$-linear morphisms, while $\deg(\Pi)=+1$ follows from the equation $\Pi \cdot a=\tau_{d}(a)\cdot \Pi$ inside of $\mathcal{O}_{D}.$ Now, recall that the finite projective $R$-module $P/Q$ identifies with the $R$-module $\Lie(X)$ and $P_{i}/Q_{i}$ corresponds then under this identification to the $\psi_{i}$-eigenspace, so that it has to be finite locally free of rank $1$; it is easy to see that one may reverse this whole process and get from the triple $(\mathcal{P},\Pi,\mathbb{Z}/d\mathbb{Z}-\text{grad})$ a special formal $\mathcal{O}_{D}$-module $(X,\iota)$ over $\Spec(R)$ and this finishes the proof.
\end{proof}
One can furthermore translate quasi-isogenies as follows:
\begin{Lemma}
Let $R\in \Nilp_{\breve{\mathbb{Z}}_{p}}$ and $X,Y$ be special formal $\mathcal{O}_{D}$-modules over $\Spec(R)$ and let $\mathcal{P}(X), \mathcal{P}(Y)$ be the corresponding special $\mathcal{O}_{D}$-displays. Then the datum of a $\mathcal{O}_{D}$-linear quasi-isogeny
$$
\rho\colon X \dashrightarrow Y
$$
is equivalent to the datum of a $\mathbb{Z}/d\mathbb{Z}$-graded, $\Pi$-compatible isomorphism of isodisplays
$$
\rho\colon \mathcal{P}(X)[1/p]\simeq \mathcal{P}(Y)[1/p].
$$
\end{Lemma}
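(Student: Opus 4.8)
The plan is to first strip away the $\mathcal{O}_{D}$-action and prove the analogous statement for bare formal $p$-divisible groups, and then to feed the action of $\mathcal{O}_{D}=\mathcal{O}_{K_{d}}[\Pi]$ back in. By Remark~\ref{Remark zu SFD}~(iii) a special formal $\mathcal{O}_{D}$-module over $R$ of height $d^{2}$ and dimension $d$ is a formal $p$-divisible group, hence lies in the essential image of Zink's functor $X\mapsto\mathcal{P}(X)$ from nilpotent displays over the $p$-adic ring $R$ to formal $p$-divisible groups, which is an equivalence of categories by \cite{ZinkDisplay} and \cite{LauInventiones}. So $\mathcal{P}(X)=(P,Q,F,\dot F)$ and $\mathcal{P}(Y)$ are the displays appearing in Lemma~\ref{Beschreibung von SFD via Displays}, and I write $\mathcal{P}(X)[1/p]$ for the isodisplay obtained by inverting $p$; since $p\in I_{R}$ one has $Q[1/p]=P[1/p]$, so the underlying $W(R)[1/p]$-module of $\mathcal{P}(X)[1/p]$ is simply $P[1/p]$, which inherits the $\mathbb{Z}/d\mathbb{Z}$-grading $P[1/p]=\bigoplus_{i\in\mathbb{Z}/d\mathbb{Z}}P_{i}[1/p]$.

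First I would establish the bare statement: for formal $p$-divisible groups $X,Y$ over $R$, full faithfulness of Zink's functor together with finite generation of the relevant modules identifies the group of quasi-homomorphisms $\Hom(X,Y)\otimes_{\mathbb{Z}_{p}}\mathbb{Q}_{p}$ with the group of homomorphisms of isodisplays $\mathcal{P}(X)[1/p]\to\mathcal{P}(Y)[1/p]$, and under this identification the quasi-isogenies correspond exactly to the isomorphisms of isodisplays. For the forward direction, a quasi-isogeny is Zariski-locally on the affine scheme $\Spec(R)$ of the form $p^{-n}g$ for an honest isogeny $g\colon X\to Y$; the induced display morphism $\mathcal{P}(g)$ is injective with cokernel a finitely generated $W(R)$-module killed by a power of $p$, i.e.\ an isogeny of displays in the sense of \cite{ZinkDisplay}, so $\mathcal{P}(g)[1/p]$, and hence $p^{-n}\mathcal{P}(g)[1/p]$, is an isomorphism; since being an isomorphism of isodisplays is Zariski-local on $\Spec(R)$, the globally defined morphism attached to the quasi-isogeny is an isomorphism. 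For the converse, one uses that $\Spec(R)$ is quasi-compact: given an isomorphism $\phi$ of isodisplays, there is an $n$ with $p^{n}\phi=\mathcal{P}(\psi)$ for a genuine display morphism $\psi$ arising from a homomorphism $X\to Y$; since $\phi$ is an isomorphism, $\psi$ is an isogeny of displays, hence the underlying map $X\to Y$ is an isogeny, and $p^{-n}$ times it is a quasi-isogeny inducing $\phi$. These two constructions are manifestly mutually inverse.

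Finally I would reinsert the $\mathcal{O}_{D}$-structure. By functoriality of Zink's equivalence and of the passage to isodisplays, the actions $\iota_{X},\iota_{Y}$ of $\mathcal{O}_{D}$ become the actions of $\iota_{X}(a)[1/p]$ and $\iota_{Y}(a)[1/p]$ on the isodisplays, so a quasi-isogeny $\rho$ is $\mathcal{O}_{D}$-linear precisely when the associated isomorphism of isodisplays intertwines $\iota_{X}(a)[1/p]$ and $\iota_{Y}(a)[1/p]$ for every $a\in\mathcal{O}_{D}$. Because $\mathcal{O}_{D}=\mathcal{O}_{K_{d}}[\Pi]$ is generated over $\mathbb{Z}_{p}$ by $\mathcal{O}_{K_{d}}$ and $\Pi$, this is equivalent to the conjunction of $\Pi$-compatibility (intertwining $\iota_{X}(\Pi)[1/p]$ with $\iota_{Y}(\Pi)[1/p]$) and commutation with the $\mathcal{O}_{K_{d}}$-action; and, exactly as in the proof of Lemma~\ref{Beschreibung von SFD via Displays}, commuting with the $\mathcal{O}_{K_{d}}$-action on the underlying modules is the same as respecting their eigenspace decompositions, i.e.\ the $\mathbb{Z}/d\mathbb{Z}$-gradings $\bigoplus_{i}P_{i}[1/p]$. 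This yields the asserted equivalence.

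The only input that is not pure bookkeeping is the dictionary between isogenies of $p$-divisible groups and isogenies of displays, in both directions, which over a general base in $\Nilp_{\breve{\mathbb{Z}}_{p}}$ rests on the theorems of Zink and Lau cited above; this is the one place where mild care is needed, for instance in observing that ``quasi-isogeny'' is a Zariski-local condition whereas ``isomorphism of isodisplays'' is tested globally, so that quasi-compactness of $\Spec(R)$ is genuinely used. Granting that dictionary, there is no serious obstacle: the remaining content of the lemma is just the elementary fact that $\mathcal{O}_{D}$ is generated by $\mathcal{O}_{K_{d}}$ and $\Pi$ together with the description of the grading as $\mathcal{O}_{K_{d}}$-eigenspaces.
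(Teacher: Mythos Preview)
Your proposal is correct and follows essentially the same approach as the paper: invoke the known dictionary between quasi-isogenies of formal $p$-divisible groups and isomorphisms of their associated isodisplays, and then observe that $\mathcal{O}_{D}$-linearity translates into $\Pi$-compatibility together with preservation of the $\mathbb{Z}/d\mathbb{Z}$-grading. The paper's own proof is a one-line citation of this dictionary, whereas you have carefully unpacked both the passage to isodisplays (including the Zariski-local nature of quasi-isogenies and the use of quasi-compactness) and the decomposition $\mathcal{O}_{D}=\mathcal{O}_{K_{d}}[\Pi]$; this is extra detail rather than a different argument.
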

For the definition of isodisplays, please see \cite[Definition 61]{ZinkDisplay} and then \textit{loc.cit.} Example 63 how to associate to a display an isodisplay.
\begin{proof}
This follows because a quasi-isogeny between formal $p$-divisible groups translates into an isomorphism of the isodisplays corresponding to the nilpotent displays of the respective formal $p$-divisible groups.
\end{proof}
In total, this allows one to reformulate the moduli problem of special formal $\mathcal{O}_{D}$-modules up to isogeny completely in terms of displays: Let $(N,\varphi_{N},\iota)$ be the $\mathcal{O}_{D}$-isodisplay associated to the special $\mathcal{O}_{D}$-display of $\mathbb{X}.$ The functor 
$$
\mathcal{D}\colon \Nilp_{W}^{op}\rightarrow \Set
$$
introduced in the introduction is isomorphic to the functor that sends a $p$-nilpotent $W$-algebra $R$ to equivalence classes of tuples $\lbrace (\mathcal{P},\Pi,\mathbb{Z}/d\mathbb{Z}-\text{grad},\rho) \rbrace/ \sim$ where $(\mathcal{P},\Pi,\mathbb{Z}/d\mathbb{Z}-\text{grad})$ is a special $\mathcal{O}_{D}$-display over $R$ and 
$$
\rho\colon N\otimes_{W_{\mathbb{Q}}(k)}W_{\mathbb{Q}}(R/pR)\simeq \mathcal{P}_{R/pR}[1/p],
$$
is an isomorphism of special $\mathcal{O}_{D}$-Isodisplays, which is of height $0.$ Two pairs $(\mathcal{P},\Pi,\mathbb{Z}/d\mathbb{Z}-\text{grad},\rho)$ and $(\mathcal{P}^{\prime},\Pi^{\prime},\mathbb{Z}/d\mathbb{Z}-\text{grad},\rho^{\prime})$ are equivalent, if the $\mathcal{O}_{D}$-linear quasi-isogeny 
$$
\rho^{\prime}\circ \rho^{-1}\colon \mathcal{P}^{\prime}_{R/pR}\dashrightarrow \mathcal{P}_{R/pR}
$$
lifts to an isomorphism of special $\mathcal{O}_{D}$-displays $(\mathcal{P},\Pi,\mathbb{Z}/d\mathbb{Z}-\text{grad})\simeq (\mathcal{P}^{\prime},\Pi^{\prime},\mathbb{Z}/d\mathbb{Z}-\text{grad}).$
\\
I next discuss a bit the deformation theory of special formal $\mathcal{O}_{D}$-modules from the point of view of displays. Let me consider a pd-thickening of $p$-nilpotent $\breve{\mathbb{Z}}_{p}$-algebras $S\twoheadrightarrow R$ with kernel denoted by $\mathfrak{a}.$ One requires that the pd-structure $\gamma$ on $\mathfrak{a}$ is compatible with the natural one on $p\mathbb{Z}_{p}.$ Recall that one then has Zink's relative Witt-frame $\mathcal{W}(S/R),$ which comes with a strict nil-crystalline frame-morphism
$$
\epsilon_{S/R}\colon \mathcal{W}(S/R)\rightarrow \mathcal{W}(R).
$$
This frame is defined as follows: let 
$$
\Fil(W(S)):=\ker(W(S)\rightarrow R)=W(\mathfrak{a})+I_{S}.
$$
Zink's logarithmic ghost components (c.f. \cite[section 1.4.]{ZinkDisplay}) define an isomorphism
$$
\log\colon W(\mathfrak{a})\simeq \prod_{i=0}^{\infty}\mathfrak{a}.
$$
This gives the splitting $W(\mathfrak{a})=\mathfrak{a}\oplus I(\mathfrak{a}),$ where $I(\mathfrak{a})=V(W(\mathfrak{a}))$ and $\Fil(W(S))=\mathfrak{a}\oplus I(S).$ Using this one can define (c.f. \cite[Lemma 38]{ZinkDisplay})
$$
\dot{F}\colon \Fil(W(S))\rightarrow W(S)
$$
via $\dot{F}\!\mid_{I(S)}=V^{-1}$ and $\dot{F}\!\mid_{\mathfrak{a}}=0.$ In total, one has the frame with constant $p$
$$
\mathcal{W}(S/R)=(W(S),\Fil(W(S)),R, F,\dot{F}).
$$
The main theorem of the deformation theory of displays (\cite[Thm. 44 ]{ZinkDisplay}) says that 
$$
\epsilon_{S/R}\colon \mathcal{W}(S/R)\rightarrow \mathcal{W}(R)
$$
induces an equivalence of categories between nilpotent displays over $R$ and nilpotent windows over the frame $\mathcal{W}(S/R).$
\begin{Lemma}\label{Lemma zu relativen SFD Windows}
The category of special $\mathcal{O}_{D}$-displays of height $d^{2}$ and dimension $d$ over $R$ is equivalent to the category of triples $(\widetilde{\mathcal{P}},\Pi,\mathbb{Z}/d\mathbb{Z}-\grad),$ where $\widetilde{\mathcal{P}}$ is a nilpotent $\mathcal{W}(S/R)$-window of height $d^{2}$, $\Pi\in \End_{\mathcal{W}(S/R)}(\widetilde{\mathcal{P}}),$ such that $\Pi^{d}=\cdot p,$ and one has a $\mathbb{Z}/d\mathbb{Z}$-grading $\widetilde{P}=\bigoplus_{i\in \mathbb{Z}/d\mathbb{Z}}\tilde{P}_{i},$ $\widetilde{Q}=\bigoplus_{i\in \mathbb{Z}/d\mathbb{Z}}\tilde{Q}_{i},$ $\tilde{Q}_{i}=\tilde{Q}\cap \tilde{P}_{i},$ $\deg(\dot{F})=\deg(F)=+1,$ $\deg(\Pi)=+1$ and for the finite projective $R$-module $\tilde{P}/\tilde{Q},$ one requires that all graded parts $\tilde{P}_{i}/\tilde{Q}_{i}$ are finite locally free $R$-modules of rank $1.$
\end{Lemma}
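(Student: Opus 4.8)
The plan is to deduce this directly from Zink's deformation theorem \cite[Thm.~44]{ZinkDisplay}, which, via the strict nil-crystalline frame morphism $\epsilon_{S/R}\colon \mathcal{W}(S/R)\rightarrow \mathcal{W}(R)$, provides an equivalence of categories between nilpotent displays over $R$ and nilpotent $\mathcal{W}(S/R)$-windows. By the footnote to Lemma \ref{Beschreibung von SFD via Displays}, a special $\mathcal{O}_{D}$-display of height $d^{2}$ and dimension $d$ is in particular a nilpotent display, so the category in the statement is a full subcategory of nilpotent displays over $R$; dually, a triple $(\widetilde{\mathcal{P}},\Pi,\mathbb{Z}/d\mathbb{Z}-\grad)$ as in the statement gives a nilpotent window, so the ``nilpotent'' adjective is consistent rather than an extra hypothesis. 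It therefore suffices to check that, under the base-change functor of \cite[Thm.~44]{ZinkDisplay}, the extra data — the endomorphism $\Pi$ with $\Pi^{d}=\cdot p$, the $\mathbb{Z}/d\mathbb{Z}$-grading with its degree conditions, and the rank-one condition on the graded pieces of $\tilde P/\tilde Q$ — correspond on the two sides. Since the functor is an equivalence, it will then restrict to an equivalence of the corresponding full subcategories, which is the assertion.

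First I would transport the operator $\Pi$: an equivalence of categories is in particular a ring isomorphism $\End_{\mathcal{W}(S/R)}(\widetilde{\mathcal{P}})\simeq \End_{\Displ}(\mathcal{P})$ on endomorphism rings, so a polynomial identity such as $\Pi^{d}=\cdot p$ is preserved and reflected. For the grading I would argue exactly as in the proof of Lemma \ref{Beschreibung von SFD via Displays}: since $S$ is again a $\breve{\mathbb{Z}}_{p}$-algebra, the Cartier morphism and the embeddings $\psi_{i}$ give an idempotent decomposition $W(S)\otimes_{\mathbb{Z}_{p}}\mathcal{O}_{K_{d}}\simeq \prod_{i\in \mathbb{Z}/d\mathbb{Z}}(W(S))_{\psi_{i}}$ compatible with the one over $W(R)$. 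Hence a $\mathbb{Z}/d\mathbb{Z}$-grading $\widetilde P=\bigoplus_{i}\tilde P_{i}$ with $\tilde Q_{i}=\tilde Q\cap \tilde P_{i}$ and $\deg(F)=\deg(\dot F)=+1$ is the same datum as an $\mathcal{O}_{K_{d}}$-action on $\widetilde{\mathcal{P}}$ by frame endomorphisms which is $F$-semilinear over $\tau_{d}$ (the degree conditions encoding precisely this semilinearity), and likewise over $R$; these $\mathcal{O}_{K_{d}}$-actions correspond under the equivalence, and so do the associated gradings, including the compatibility $\deg(\Pi)=+1$ coming from $\Pi a=\tau_{d}(a)\Pi$ in $\mathcal{O}_{D}$.

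It remains to compare the Lie-algebra conditions. The point is that the base-change functor along $\epsilon_{S/R}$ induces a canonical isomorphism of finite projective $R$-modules $\tilde P/\tilde Q\xrightarrow{\ \sim\ }P/Q$: indeed $\Fil(W(S))=\ker(W(S)\rightarrow R)$, so $\tilde P/\tilde Q$ is already an $R$-module and its formation commutes with passing to the associated window over $\mathcal{W}(R)$ (on which $\epsilon_{S/R}$ is the identity). This isomorphism is $\mathcal{O}_{K_{d}}$-equivariant by the previous step, hence restricts to isomorphisms $\tilde P_{i}/\tilde Q_{i}\simeq P_{i}/Q_{i}$ of $R$-modules. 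Consequently ``all $\tilde P_{i}/\tilde Q_{i}$ locally free of rank $1$'' is equivalent to the corresponding condition for $\mathcal{P}$, and the height $d^{2}$ of the window equals $\rk_{W(S)}\widetilde P=\rk_{W(R)}P$. Putting everything together, a triple as in the statement maps under \cite[Thm.~44]{ZinkDisplay} to a nilpotent display over $R$ carrying exactly the structure of a special $\mathcal{O}_{D}$-display of height $d^{2}$ and dimension $d$, and conversely, so the equivalence restricts as desired.

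I do not expect a serious obstacle here; the content is entirely the bookkeeping that Zink's deformation equivalence is compatible with the additional linear-algebraic structure, and the author himself flags the statement as ``rather formal and well-known''. The one step that deserves an explicit word is the identification $\tilde P/\tilde Q\simeq P/Q$ together with its $\mathcal{O}_{K_{d}}$-equivariance, since the whole rank-one — hence special-$\mathcal{O}_{D}$ — condition is read off from it; and, as noted above and in the spirit of Remark \ref{Remark zu SFD}(iii), the nilpotence restriction on windows is harmless because every object on either side of the claimed equivalence is automatically nilpotent.
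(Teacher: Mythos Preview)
Your proposal is correct and follows exactly the paper's approach: the paper's proof is the single sentence ``This follows formally by chasing the additional structures through the equivalence of categories induced by the frame morphism $\epsilon_{S/R}$,'' and you have carefully spelled out precisely this chase---transporting $\Pi$ via endomorphism rings, the grading via the $\mathcal{O}_{K_{d}}$-idempotents in $W(S)$, and the rank-one condition via the identification $\tilde P/\tilde Q\simeq P/Q$. Your remark that the nilpotence hypothesis on the window side is automatic (mirroring the footnote to Lemma~\ref{Beschreibung von SFD via Displays}) is a useful clarification that the paper leaves implicit.
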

\begin{proof}
This follows formally by chasing the additional structures through the equivalence of categories induced by the frame morphism $\epsilon_{S/R}.$
\end{proof}
The statement I will actually need for the construction of the universal special $\mathcal{O}_{D}$-display is the following:
\begin{Lemma}\label{Deformationstheorie von SFD-displays}
Let $(\mathcal{P},\Pi, \mathbb{Z}/d\mathbb{Z}-\grad)$ be a special $\mathcal{O}_{D}$-display over $R.$ Denote by $(\widetilde{\mathcal{P}},\Pi, \mathbb{Z}/d\mathbb{Z}-\grad)$ the uniquely determined lift, as provided by the previously lemma. Then to give a lift of $(\mathcal{P},\Pi, \mathbb{Z}/d\mathbb{Z}-\grad)$ towards a special $\mathcal{O}_{D}$-display over $S$ is the same as giving oneself a colletion of surjections of finite projective $S$-modules
$$
\tilde{P}_{i}/I(S)\tilde{P}_{i}\twoheadrightarrow L_{i},
$$
for $i\in \mathbb{Z}/d\mathbb{Z},$  and morphisms $L_{i}\rightarrow L_{i+1},$ such that
\begin{enumerate}
\item[(a):] For all $i\in \mathbb{Z}/d\mathbb{Z}$ one has a commutative diagram
$$
\xymatrix{
\tilde{P}_{i} \ar[r]^{\Pi} \ar[d] & \tilde{P}_{i+1} \ar[d] \\
L_{i} \ar[r] & L_{i+1},
}
$$,
\item[(b):] this is a lift of the Hodge-filtration $P_{i}/I(R)P_{i}\twoheadrightarrow P_{i}/Q_{i}$ (so that all $L_{i}$ are automatically line bundles on $S$) and of the corresponding commutative diagram
$$
\xymatrix{
P_{i} \ar[r]^{\Pi} \ar[d] & P_{i+1} \ar[d] \\
P_{i}/Q_{i} \ar[r] & P_{i+1}/Q_{i+1}.
}
$$
\end{enumerate}
\end{Lemma}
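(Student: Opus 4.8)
The plan is to deduce this directly from Zink's deformation theorem \cite[Thm.~44]{ZinkDisplay} (already invoked above) by carrying the operator $\Pi$ and the $\mathbb{Z}/d\mathbb{Z}$-grading through the window equivalences, in the spirit of Lemma \ref{Lemma zu relativen SFD Windows}. First I would note that besides $\epsilon_{S/R}\colon \mathcal{W}(S/R)\to \mathcal{W}(R)$ there is a natural frame morphism $\mathcal{W}(S)\to \mathcal{W}(S/R)$, given by the identity on $W(S)$, the inclusion $I(S)\hookrightarrow \Fil(W(S))$ on filtrations, and the projection $S\twoheadrightarrow R$ on constant rings; this is a frame morphism precisely because $\dot{F}$ of $\mathcal{W}(S/R)$ restricts to $V^{-1}$ on $I(S)$, i.e.\ agrees with $\dot{F}$ of $\mathcal{W}(S)$. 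Since a display over $S$ is by definition a nilpotent window over $\mathcal{W}(S)$, and since window base change along the composite $\mathcal{W}(S)\to \mathcal{W}(S/R)\xrightarrow{\epsilon_{S/R}}\mathcal{W}(R)$ is reduction of displays to $R$, the uniqueness assertion in Lemma \ref{Lemma zu relativen SFD Windows} shows that lifting $(\mathcal{P},\Pi,\mathbb{Z}/d\mathbb{Z}-\grad)$ to a special $\mathcal{O}_{D}$-display over $S$ is the same as lifting the window $\widetilde{\mathcal{P}}$ over $\mathcal{W}(S/R)$, together with its $\Pi\in \End_{\mathcal{W}(S/R)}(\widetilde{\mathcal{P}})$ and its grading, to a window over $\mathcal{W}(S)$ along $\mathcal{W}(S)\to \mathcal{W}(S/R)$.

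Next I would make the latter explicit. Window base change along $\mathcal{W}(S)\to \mathcal{W}(S/R)$ leaves the underlying $W(S)$-module and the operator $F$ unchanged, replaces the submodule $Q'$ by $Q'+W(\mathfrak{a})\tilde{P}$, and — since $\dot{F}$ of $\mathcal{W}(S/R)$ kills $W(\mathfrak{a})$ — is such that the $\dot{F}$ of the lift is just the restriction to $Q'$ of the $\dot{F}$ of $\widetilde{\mathcal{P}}$. Hence a lift of $\widetilde{\mathcal{P}}$ to a window over $\mathcal{W}(S)$ amounts exactly to the choice of a $W(S)$-submodule $Q'$ with $I(S)\tilde{P}\subseteq Q'\subseteq \tilde{P}$, with $\tilde{P}/Q'$ finite projective over $S$ and with $Q'+W(\mathfrak{a})\tilde{P}=\tilde{Q}$, i.e.\ a lift of the Hodge filtration $Q\subseteq P$; equivalently, using $I(S)\tilde{P}\subseteq Q'$, a surjection $\tilde{P}/I(S)\tilde{P}\twoheadrightarrow L:=\tilde{P}/Q'$ of finite projective $S$-modules reducing modulo $\mathfrak{a}$ to $P/I(R)P\twoheadrightarrow P/Q$ (recall $\tilde{P}/I(S)\tilde{P}$ is finite projective over $S$ and reduces to $P/I(R)P$). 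That every such choice genuinely defines a nilpotent window over $\mathcal{W}(S)$ — the window axioms being automatic, and nilpotence holding because $\mathfrak{a}$ is a nil ideal — is precisely Zink's deformation theory in \cite{ZinkDisplay}, the display-theoretic form of the Grothendieck--Messing criterion, so I would not reprove it.

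Finally I would feed in the extra structure. Because $\End$ and gradings are preserved by all of the equivalences above, the operator $\Pi$ on the lift is forced to be the unique $W(S)$-linear extension of $\Pi$ to $\tilde{P}$, and the grading of the lift is the given $\tilde{P}=\bigoplus_{i}\tilde{P}_{i}$; the only remaining constraint is thus that $Q'$ be graded, $Q'=\bigoplus_{i}Q'_{i}$, and $\Pi$-stable. Since $\tilde{P}/I(S)\tilde{P}\twoheadrightarrow L$ is a surjection of finite projective $S$-modules and all maps in sight are $S$-linear, $Q'$ is graded and $\Pi$-stable if and only if $L$ carries a grading $L=\bigoplus_{i}L_{i}$ with $\tilde{P}_{i}/I(S)\tilde{P}_{i}\twoheadrightarrow L_{i}$ for all $i$ and $\Pi$ descending to morphisms $L_{i}\to L_{i+1}$; this is the data of (a), while (b) records that these surjections and squares reduce modulo $\mathfrak{a}$ to the Hodge filtration over $R$ and its $\Pi$-compatibility, which in turn forces each $L_{i}$ to be a line bundle, being finite projective over $S$ and reducing to the rank-one module $P_{i}/Q_{i}$. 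For the converse one simply sets $Q'=\bigoplus_{i}\ker(\tilde{P}_{i}\to L_{i})$ and invokes the previous two paragraphs. I expect the only real obstacle to be verifying the compatibility of the two frame morphisms and of window base change along their composite used in the first two paragraphs; once that bookkeeping is in place, the remainder is, as in Lemma \ref{Lemma zu relativen SFD Windows}, a formal chase through the equivalences.
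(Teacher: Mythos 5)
Your proposal is correct and follows essentially the same route as the paper: the paper's proof simply cites Zink's result that lifts of a nilpotent display along a pd-thickening correspond to lifts of the Hodge filtration (\cite[Thm.~48]{ZinkDisplay}) and notes that conditions (a) and (b) are exactly the graded, $\Pi$-compatible form of such a lift. You have merely unpacked the mechanism behind that citation (the frame morphism $\mathcal{W}(S)\to\mathcal{W}(S/R)$ and window base change) before doing the same formal chase of the extra structure.
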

\begin{proof}
This is a formal consequence of the fact that lifts of nilpotent displays along pd-thickenings correspond to lifts of the Hodge-filtration \cite[Thm. 48]{ZinkDisplay}. The additional assumptions one puts on this lift of the Hodge-filtration take care of the structure of a $\mathcal{O}_{D}$-display.
\end{proof}
\section{How to find the displays over the special fiber}\label{Section How to find displays}
In this section I want to explain how one can find the right displays over the special fiber; at least over the opens where exactly (!) one (resp. a certain number of) indice(s) is (are) critical. Let me first explain what I mean by the right displays: The idea is to first construct a morphism
$$
\Upsilon_{0}\colon \widehat{\Omega}^{2}_{\breve{\mathbb{Z}}_{p}}\times_{\Spf(\breve{\mathbb{Z}}_{p})}\Spec(k)\rightarrow \mathcal{D}\times_{\Spf(\breve{\mathbb{Z}}_{p})}\Spec(k),
$$
which, since $\widehat{\Omega}^{2}_{\breve{\mathbb{Z}}_{p}}\times_{\Spf(\breve{\mathbb{Z}}_{p})}\Spec(k)$ is constructed from glueing all the $\overline{U}_{\Delta}=\Spec(\overline{A}_{\Delta}),$ just amounts to giving oneself compatible special $\mathcal{O}_{D}$-displays $(\mathcal{P}_{\Delta},\Pi,\mathbb{Z}/d\mathbb{Z}-\grad)$ over $ \overline{U}_{\Delta}=\Spec(\overline{A}_{\Delta}),$ together with a $\mathcal{O}_{D}$-linear quasi-isogeny $\rho_{\Delta}$ to the chosen standard $\mathcal{O}_{D}$-isodisplay over $k.$ Compatible means that whenever one has a sub-simplex $\Delta^{\prime}\subseteq \Delta$ and $j\colon \overline{U}_{\Delta^{\prime}}\hookrightarrow \overline{U}_{\Delta}$ denotes the canonical open immersion, then
$$
j^{*}(\mathcal{P}_{\Delta})\simeq \mathcal{P}_{\Delta^{\prime}}.
$$
Then these displays should be constructed in such a way, that $\Upsilon_{0}$ induces automatically a bijection on $\Spec(k)$-rational points. To achieve this, the idea is simply to start with the morphism $$\Phi_{k}\colon \mathcal{D}(k)\rightarrow \widehat{\Omega}^{2}_{\breve{\mathbb{Z}}_{p}}(k)$$ constructed by Drinfeld and then use its inverse. This will then naturally lead to the wanted displays.
\\
Let me first recall this map $\Phi_{k}:$ I denoted by $\mathbb{X}$ the chosen special formal $\mathcal{O}_{D}$-module of height $d^{2}$ and dimension $d$ over $\Spec(k).$ Denote by $(M(\mathbb{X}),V,\Pi,\mathbb{Z}/d\mathbb{Z}-\text{grad})$ the corresponding Dieudonné-module and by $(N, V, \Pi, \mathbb{Z}/d\mathbb{Z}-\text{grad})$ the associated  $\mathcal{O}_{D}$-Isocrystal over $k.$ Then one will build the $p$-adic formal scheme $\widehat{\Omega}^{d}_{\mathbb{Z}_{p}}$ with respect to the Bruhat-Tits building of the group $\PGL(N_{0}^{V^{-1}\Pi}).$ 
\\
Now I will choose a trivialization of the framing isocrystal. Denote by $e_{1},...,e_{d}$ the standardbasis of $W_{\mathbb{Q}}(k)^{d}.$ Then let $N_{i}=W_{\mathbb{Q}}(k)^{d}$ for all $i\in \lbrace 0,...,d-1\rbrace$ and for $i\neq d-1$ let $\Pi\colon N_{i}\rightarrow N_{i+1}$ be the identity while finally $\Pi\colon N_{d-1}\rightarrow N_{0}$ is the homothety given by multiplication with $p.$ Furthermore, for $i\neq d-1,$ $V\colon N_{i}\rightarrow N_{i+1}$ is given by $\sigma^{-1}$ and $V\colon N_{d-1}\rightarrow N_{0}$ is given by $\sigma^{-1}\cdot p.$ Having all this set up, one can finally recall the construction of $\Phi_{k}:$ Let $(M,V,\Pi,\mathbb{Z}/d\mathbb{Z}-\text{grad})$ be a special $\mathcal{O}_{D}$-Dieudonné-module over $k$ and one is given an inclusion $\rho\colon M\hookrightarrow N,$ which is compatible with all additional structure; this determines the operators $V$ and $\Pi$ on the Dieudonné-module. Now let $\lbrace i_{1},...,i_{r}\rbrace $ be the collection of critical indices for $M.$ Then $V^{-1}\Pi\colon M_{i_{k}}\rightarrow M_{i_{k}}$ is well-defined and by an easy Eigenbasis-lemma (here one uses e.g. that $k$ is algebraically closed), one sees that $\Lambda_{i_{k}}=M_{i_{k}}^{V^{-1}\Pi}$ is a free $\mathbb{Z}_{p}$-module of rank $d,$ which is via the quasi-isogeny canonically identified with a lattice inside of $\mathbb{Q}_{p}^{d}=N_{0}^{V^{-1}\Pi}.$ In total, one sees that the collection of these lattices $\lbrace \Lambda_{i_{1}},...,\Lambda_{i_{r}} \rbrace$ inside of $N_{0}^{V^{-1}\Pi}$ gives a simplex $\Delta$ in the Bruhat-Tits building of $\PGL(N_{0}^{V^{-1}\Pi}).$ One can now consider the diagram
$$
\xymatrix{
\Lambda_{i_{r}} \ar[d] \ar[r] & \Lambda_{i_{1}} \ar[d] \ar[r] & ...& \ar[r]& \Lambda_{i_{r}} \ar[d] \\
M_{i_{r}}/VM_{i_{r}-1} \ar[r]^{0} & M_{i_{1}}/VM_{i_{1}-1} \ar[r]^{0} &...&\ar[r]^{0} & M_{i_{r}}/VM_{i_{r}-1}.
}
$$
I claim that this defines indeed a point of $\widehat{U}_{\Delta}(\Spec(k)).$
\\
To verify this, one has to see that the condition (Deligne) is satisfied, i.e. argueing by contradiction one has to see that if $m\in M_{i_{k}}^{V^{-1}\Pi=1},$ such that $m$ vanishes under the morphism $M_{i_{k}}^{V^{-1}\Pi=1}\rightarrow M_{i_{k}}/VM_{i_{k}-1},$ then it is the image under $\Pi$ of some element $n\in M_{i_{k-1}}^{V^{-1}\Pi=1}.$ First, recall that the index $i_{k}$ being critical means that the morphism induced by $\Pi$ on Lie-algebras, $\Lie_{i_{k}}\rightarrow \Lie_{i_{k}+1}$ is constant zero. This implies that $\Lie_{i_{k}}$ is isomorphic to $\ker(\bar{V}\colon M_{i_{k}}/\Pi M_{i_{k}-1}\rightarrow M_{i_{k}+1}/\Pi M_{i_{k}})$ by the snake lemma: here one looks at the following commutative diagram of short exact sequences
$$
\xymatrix{
 & 0 \ar[d] & 0 \ar[d] & & \\
0 \ar[r] & M_{i} \ar[r]^{\Pi} \ar[d]^{V} & M_{i+1} \ar[r] \ar[d]^{V} & M_{i+1}/\Pi M_{i} \ar[d]^{\bar{V}} \ar[r] & 0 \\
0 \ar[r] & M_{i+1} \ar[r]^{\Pi} \ar[d] & M_{i+2} \ar[r] \ar[d] & M_{i+2}/\Pi M_{i+1} \ar[r] & 0 \\
& \Lie_{i+1} \ar[r]^{\bar{\Pi}} & \Lie_{i+2}. & &
}
$$
Now consider $m\in M_{i_{k}}^{V^{-1}\Pi=1}.$ Since $V(m)=\Pi(m),$ it follows that $$m\in \ker(\bar{V}\colon M_{i_{k}}/\Pi M_{i_{k}-1}\rightarrow M_{i_{k}+1}/\Pi M_{i_{k}}).$$ If the image of $m$ is zero under
$$
M_{i_{k}}^{V^{-1}\Pi=1}\rightarrow M_{i_{k}}/VM_{i_{k}-1},
$$
then it follows from the previously explained isomorphism
$$
\Lie_{i_{k}}\simeq \ker(\bar{V}\colon M_{i_{k}}/\Pi M_{i_{k}-1}\rightarrow M_{i_{k}+1}/\Pi M_{i_{k}})
$$
that also the image of $m$ in $M_{i_{k}}/\Pi M_{i_{k}-1}$ vanishes. This means that one finds a uniquely determined (since $\Pi$ is injective) $m^{\prime}\in M_{i_{k}-1},$ such that
$$
\Pi(m^{\prime})=m.
$$
The next step is to construct a uniquely determined element $m^{\prime \prime}\in M_{i_{k}-2},$ such that
$$
\Pi(m^{\prime \prime})=m^{\prime}.
$$
Here the assumption that $i_{k-1}$ is not critical is crucial: by definition this means that
$$
\overline{\Pi}\colon \Lie_{i_{k}-1}\simeq \Lie_{i_{k}}.
$$
Using the snake lemma and again the commutative diagram above, this implies
$$
\overline{V}\colon M_{i_{k}-1}/\Pi M_{i_{k}-2}\simeq M_{i_{k}}/\Pi M_{i_{k}-1}.
$$
To see that the image $\overline{m^{\prime}}$ of $m^{\prime}$ in $M_{i_{k}-1}/\Pi M_{i_{k}-2}$ vanishes, it therefore suffices to see that 
$$
\overline{V}(\overline{m^{\prime}})=0.
$$
This would be the case, if $V(m^{\prime})=\Pi(m^{\prime}).$ Since $\Pi$ is injective, this equality can be checked after application of $\Pi,$ where it then follows from the assumption that $m\in M_{i_{k}}^{V^{-1}\Pi=1}.$ Continuing in this fashion, one sees that the condition (Deligne) is really satisfied.
\\
Now I want to construct the inverse of the previously constructed morphism $\Phi_{k}.$ For this, I have to explain how to reconstruct from a given $\Spec(k)$-valued point of $\widehat{U}_{\Delta}$ a special $\mathcal{O}_{D}$-Dieudonné-module, which is included into $(N,V,\Pi,\mathbb{Z}/d\mathbb{Z}-\text{grad}).$ Here $\Delta$ is a simplex $\lbrace [\Lambda_{i_{1}}],...,[\Lambda_{i_{r}}]\rbrace$ and in the following I will choose representatives sitting in a chain
$$
p\Lambda_{i_{r}}\subset \Lambda_{i_{1}}\subset ... \subset \Lambda_{i_{r}}.
$$
For the special formal $\mathcal{O}_{D}$-Dieudonné-module I am about to construct exactly the indices $i_{1},...,i_{r}$ will be critical and for a critical index $i_{k}$ one can set $M_{i_{k}}=\Lambda_{i_{k}}\otimes_{\mathbb{Z}_{p}} W(k),$ so that one has to find the right graded pieces $M_{i_{k}-1},...,M_{i_{k-1}+1},$ sitting inside $N_{i_{k}-1},...,N_{i_{k-1}+1}$ such that the operators $V$ and $\Pi$ of $(N,V,\mathbb{Z}/d\mathbb{Z}-\text{grad})$ will induce $V,\Pi\colon M_{i_{k}-l-1}\rightarrow M_{i_{k}-l}.$ Now for $i_{k-1}$ one looks at the arrow $M_{i_{k}}\rightarrow L_{i_{k}},$ which is given (by the point in the Omega). If one would already know $M_{i_{k}-1},$ one would have $L_{i_{k}}=M_{i_{k}}/VM_{i_{k}-1},$ so that one is led to consider
$$
I_{i_{k}-1}=\ker(M_{i_{k}}\rightarrow L_{i_{k}}).
$$
This is a free $W$-module of rank $d.$ I define then $M_{i_{k}-1}=W\otimes_{F,W}I_{i_{k}-1},$ still a free $W$-module of rank $d,$ which I equip with
$$
V\colon M_{i_{k}-1}\rightarrow I_{i_{k}-1}\subseteq M_{i_{k}},
$$
given by $w\otimes m\mapsto \sigma^{-1}(w)\cdot m.$ To define the operator $\Pi,$ recall that I consider $M_{i_{k}}$ as included inside of the isocrystal $N,$ on which I have the operator $U=V^{-1}\Pi$ given and one knows that $U(M_{i_{k}})\subseteq M_{i_{k}}.$ It therefore makes sense to define
$$
\Pi=U\circ V\colon M_{i_{k}-1}\rightarrow M_{i_{k}}.
$$
Next I will consider the index $i_{k}-2:$ To motivate the construction of $M_{i_{k}-2},$ recall that if one would already know $M_{i_{k}-2},$ then one would have a commutative diagram
$$
\xymatrix{
M_{i_{k}-2}\ar[r]^{\Pi} \ar[d] & M_{i_{k}-1} \ar[d] \\
\Lie_{i_{k}-2} \ar[r]^{\simeq} & \Lie_{i_{k}-1},
}
$$
where the lower horizontal arrow would be an isomorphism. It would follow that 

$$\ker(M_{i_{k}-2}\rightarrow \Lie_{i_{k}-2})\simeq \lbrace m\in M_{i_{k}-2}\colon \Pi(m)\in \ker(M_{i_{k}-1}\rightarrow \Lie_{i_{k}-1}) \rbrace.$$

 This is the reason, why I define
$$
I_{i_{k}-2}=\lbrace m \in M_{i_{k}-1}\colon \Pi(m)\in I_{i_{k}-1} \rbrace
$$
and $M_{i_{k}-2}=W\otimes_{F,W} I_{i_{k}-2},$ and $V\colon M_{i_{k}-2}\rightarrow I_{i_{k}-2}\subseteq M_{i_{k}-1}$ as before. To define the operator $\Pi,$ note that I can also write $I_{i_{k}-2}=\lbrace m\in M_{i_{k}-1}\colon U(m)\in M_{i_{k}-1} \rbrace,$ so that $\Pi=U\circ V\colon M_{i_{k}-2}\rightarrow M_{i_{k}-1}$ is well-defined. This algorithm permits one then to define the inverse morphism to $\Phi_{k}.$ In particular, it allows one to write down some explicit equations, as I explain now:
\\
One can for example look at the case, where only the index $i=d-1$ is critical. Then I can take for the $d-1$-th graded piece just the $W$-sublattice of $N_{d-1}$ generated by $e_{1},...,e_{d}$ and say $\Lie_{d-1}$ is generated by the image of $e_{1}.$ Then I find uniquely determined scalars $\xi_{2},...,\xi_{d}\in k,$ such that $\ker(P_{d-1}\rightarrow \Lie_{d-1})$ has the basis $pe_{1},e_{2}-[\xi_{2}]e_{1},...,e_{d}-[\xi_{d}]e_{1}$ and this is exactly $Q_{d-1}.$ Now $P_{d-2}$ is simply the image under $\dot{F},$ i.e. under applying the Wittvector-Frobenius, so that this $W$-module has a basis given by $f_{1}^{[d-2]}=pe_{1},f_{2}^{[d-2]}=e_{2}-[\xi_{2}^{p}]e_{1},...,f_{d}^{[d-2]}=e_{d}-[\xi_{d}^{p}]e_{1}.$ Since I assumed that $i=d-2$ is not critical, one can compute $Q_{d-2}$ as follows: If I denote by $\lambda_{i}^{[(d-2)]}\in k$ the uniquely determined scalar, such that $\lambda_{i}^{[(d-2)]}\cdot e_{1}=\text{Im}(f_{i}^{[(d-2)]})$ inside of $\Lie_{(d-1)},$ then I have that
$$
Q_{(d-2)}=\langle pe_{1},pe_{2},...,[\lambda_{j}^{[(d-2)]}]f_{2}^{[(d-2)]}-[\lambda_{2}^{(d-2)}]f_{j}^{[(d-2)]},... \rangle_{W}.
$$
Here one has of course that $\lambda_{i}^{[(d-2)]}=[\xi_{i}^{p}]-[\xi].$ Continuing this algorithm, one gets equations for the special $\mathcal{O}_{D}$-displays with only the index $i=d-1$ critical.
\section{The basic construction}
From now on I have to restrict to the case $d=2$ unfortunately. The first aim is to construct two special $\mathcal{O}_{D}$-displays $\mathcal{P}^{(0)}$ resp. $\mathcal{P}^{(1)}$ over $\Spec(k[X,\frac{1}{X-X^{p}}])$ resp. $\Spec(k[Y,\frac{1}{Y-Y^{p}}]),$ which are included in the standard Isodisplay I fixed:
$$
\xymatrix{
N_{0}\ar[r]^{\Pi=id} & N_{1} \ar[r]^{\Pi=\cdot p} & N_{0}.
}
$$
Recall that I have chosen a trivialization of $N,$ i.e. a $W_{\mathbb{Q}}(k)$-basis of $e_{1},e_{2}$ of $N_{0}.$ Then I define 

$P^{(0)}=P^{(0)}_{0}\oplus P^{(0)}_{1},$ $Q^{(0)}=Q^{(0)}_{0} \oplus Q^{(0)}_{1}$

 via
\begin{enumerate}
\item[$\bullet$] $P^{(0)}_{0}=\langle pe_{1}, e_{2}-[X^{p}]e_{1} \rangle_{W(k[X,\frac{1}{X-X^{p}}])} $
\item[$\bullet$] $P^{(0)}_{1}=\langle e_{1}, e_{2} \rangle_{W(k[X,\frac{1}{X-X^{p}}])} $
\item[$\bullet$] $Q^{(0)}_{0}=\langle pe_{1},pe_{2}  \rangle_{W(k[X,\frac{1}{X-X^{p}}])} $
\item[$\bullet$] $Q^{(0)}_{1}=\langle pe_{1},e_{2}-[X]e_{1} \rangle_{W(k[X,\frac{1}{X-X^{p}}])} $
\end{enumerate}
	
	With the following operators:

\begin{enumerate}
\item[$\bullet$] $\Pi\colon P^{(0)}_{0}\rightarrow P^{(0)}_{1}$ is induced by $$\Pi=\text{id}\colon N_{0}\otimes_{W_{\mathbb{Q}}(k)}W_{\mathbb{Q}}(k[X,\frac{1}{X-X^{p}}])\rightarrow N_{1}\otimes_{W_{\mathbb{Q}}(k)}W_{\mathbb{Q}}(k[X,\frac{1}{X-X^{p}}])$$ and $\Pi\colon P^{(0)}_{1}\rightarrow P^{(0)}_{0}$ is induced by

$$\Pi=\cdot p\colon N_{1}\otimes_{W_{\mathbb{Q}}(k)}W_{\mathbb{Q}}(k[X,\frac{1}{X-X^{p}}])\rightarrow N_{0}\otimes_{W_{\mathbb{Q}}(k)}W_{\mathbb{Q}}(k[X,\frac{1}{X-X^{p}}]).$$
\item[$\bullet$] $\dot{F}\colon Q^{(0)}_{0}\rightarrow P^{(0)}_{1}$ given by $F/p$ and $\dot{F}\colon Q^{(0)}_{1}\rightarrow P^{(0)}_{0}$ given by $F.$
\end{enumerate}
This construction is summarized in the following diagram:
$$
\xymatrix{
     & 0 & 1 & 0 \\
 P: & pe_{1}, e_{2}-[X^{p}]e_{1} & e_{1},e_{2} & pe_{1},e_{2}-[X^{p}]e_{1} \\
 Q: & pe_{1},pe_{2} & pe_{1},e_{2}-[X]e_{1} & pe_{1},pe_{2}.
}
$$
Note, that one has then the following commutative diagram
$$
\xymatrix{
P^{(0)}_{0} \ar[r]^{\Pi} \ar[d]& P^{(0)}_{1} \ar[r]^{\Pi} \ar[d] & P^{(0)}_{0} \ar[d] \\
\Lie(P^{(0)})_{0} \ar[r]^{0} & \Lie( P^{(0)}_{1}) \ar[r]^{X-X^{p}} & \Lie(P^{(0)}_{0}).
}
$$
Next, I define the second family $\mathcal{P}^{(1)}:$  I define $P^{(1)}=P^{(1)}_{0}\oplus P^{(1)}_{1},$ $Q^{(1)}=Q^{1)}_{0} \oplus Q^{(1)}_{1}$ via
\begin{enumerate}
\item[$\bullet$] $P^{(1)}_{0}=\langle pe_{1}, e_{2} \rangle_{W(k[Y,\frac{1}{Y-Y^{p}}])} $
\item[$\bullet$] $P^{(1)}_{1}=\langle e_{1}, e_{1}-[Y^{p}]e_{2}/p \rangle_{W(k[Y,\frac{1}{Y-Y^{p}}])} $
\item[$\bullet$] $Q^{(1)}_{0}=\langle pe_{1}-[Y]e_{2},pe_{2}  \rangle_{W(k[Y,\frac{1}{Y-Y^{p}}])} $
\item[$\bullet$] $Q^{(1)}_{1}=\langle pe_{1}, e_{2} \rangle_{W(k[Y,\frac{1}{Y-Y^{p}}])}.$
\end{enumerate}
Here I take the same operators as before. Again, one can summarize this construction in the following diagram:
$$
\xymatrix{
     & 0 & 1 & 0 \\
 P: & pe_{1}, e_{2} & e_{1}-[Y^{p}]e_{2}/p,e_{2} & pe_{1},e_{2}\\
 Q: & pe_{1}-[Y]e_{2},pe_{2} & pe_{1},e_{2} & pe_{1}-[Y]e_{2},pe_{2}.
}
$$
Note that one has the following commutative diagram
$$
\xymatrix{
P^{(1)}_{0} \ar[r]^{\Pi} \ar[d] & P^{(1)}_{1} \ar[r]^{\Pi} \ar[d] & P^{(1)}_{0} \ar[d] \\
\Lie(P^{(1)}_{0}) \ar[r]^{Y-Y^{p}} & \Lie(P^{(1)}_{1}) \ar[r]^{0} & \Lie(P^{(1)}_{0}).
}
$$
The next step is now to glue these families $\mathcal{P}^{(0)}$ and $\mathcal{P}^{(1)}$ to one family $\mathcal{P}^{(0,1)}$ over $$\Spec(k[X,Y,\frac{1}{1-X^{p-1}},\frac{1}{1-Y^{p-1}}]/(X\cdot Y)),$$ in such a way that if one restricts to the open $D(X),$ one gets back the family $\mathcal{P}^{(0)}$ and if one restricts to the open $D(Y)$ one gets back the family $\mathcal{P}^{(1)}.$ This glueing will be done using Ferrand-glueing.
\\
For this the key observation is that $\mathcal{P}^{(0)}$ resp. $\mathcal{P}^{(1)}$ are defined also when $X=0$ resp. $Y=0$ and then they are indeed just identical. 
\\
Note first, that the ring $k[X,Y,\frac{1}{1-X^{p-1}},\frac{1}{1-Y^{p-1}}]/(X\cdot Y)$ identifies with the following fiber product\footnote{Indeed, the fiber product is given by pairs of polynomials $(f_{1},f_{2})\in k[X,\frac{1}{1-X^{p-1}}]\times k[Y,\frac{1}{1-Y^{p-1}}],$ whose constant coefficient agree. This defines an element of $k[X,Y,\frac{1}{1-X^{p-1}},\frac{1}{1-Y^{p-1}}]/(X\cdot Y).$ Conversely, an element $P\in k[X,Y,\frac{1}{1-X^{p-1}},\frac{1}{1-Y^{p-1}}]/(X\cdot Y)$ has no mixed terms in $X$ and $Y,$ so one can uniquely write it as $a_{0}+f_{1}(X)+f_{2}(X),$ where $(f_{1},f_{2})\in k[X,\frac{1}{1-X^{p-1}}]\times k[Y,\frac{1}{1-Y^{p-1}}]$ have no constant term. This gives the inverse.}
$$
\xymatrix{
k[X,\frac{1}{1-X^{p-1}}]\times_{k} k[Y,\frac{1}{1-Y^{p-1}}] \ar[r] \ar[d] & k[Y,\frac{1}{1-Y^{p-1}}] \ar[d]^{Y\mapsto 0} \\
k[X,\frac{1}{1-X^{p-1}}] \ar[r]^{X\mapsto 0} & k.
}
$$
Observe that the functor of taking $p$-typical Wittvectors commutes with fiberproducts, so that one gets the cartesian diagram
$$
\xymatrix{
W(k[X,Y,\frac{1}{1-X^{p-1}},\frac{1}{1-Y^{p-1}}]/(X\cdot Y)) \ar[r]^-{i_{1}} \ar[d]^{i_{2}} & W(k[Y,\frac{1}{1-Y^{p-1}}]) \ar[d]^{j_{1}} \\
W(k[X,\frac{1}{1-X^{p-1}}]) \ar[r]^{j_{2}} & W(k).
}
$$
This will allow one to construct a special $\mathcal{O}_{D}$-display $(\mathcal{P}^{(0,1)},\Pi, \mathbb{Z}/d\mathbb{Z}-\text{grad}),$ which admits an inclusion, that is compatible with all extra structure, towards $$N\otimes_{W_{\mathbb{Q}}(k)} W_{\mathbb{Q}}(k[X,Y,\frac{1}{1-X^{p-1}},\frac{1}{1-Y^{p-1}}]/(X\cdot Y)).$$ It will have the crucial property, that now one can set either the zeroth or the first index critical and such that over $D(X),$ one gets back the family $\mathcal{P}^{(0)}$ and over $D(Y),$ one gets back the family $\mathcal{P}^{(1)}.$ 
The idea is to glue the two families $(\mathcal{P}^{(0)},\Pi,\mathbb{Z}/d\mathbb{Z}-\text{grad},\rho^{(0)})$ and $(\mathcal{P}^{(1)},\Pi,\mathbb{Z}/d\mathbb{Z}-\text{grad},\rho^{(1)})$ along the closed point of intersection, which is where both indices are critical, i.e. where $X=0$ resp. $Y=0.$ 
\\
First note, that both families are also defined over $k[X,\frac{1}{1-X^{p-1}}]$ resp. $k[Y,\frac{1}{1-Y^{p-1}}].$ One then has an isomorphism of special $\mathcal{O}_{D}$-displays
\begin{equation}
\text{id}\colon \mathcal{P}^{(1)}\otimes_{W(k[Y,\frac{1}{1-Y^{p-1}}]),j_{2}}W(k)\simeq \mathcal{P}_{0} \simeq \mathcal{P}^{(0)}\otimes_{W(k[X,\frac{1}{1-X^{p-1}}]),j_{1}}W(k).
\end{equation}
Let me write in the following $\overline{A}_{\Delta^{\text{std}}}=k[X,Y,\frac{1}{1-X^{p-1}},\frac{1}{1-Y^{p-1}}]/(X\cdot Y).$
\\
 I will construct a nilpotent display $\mathcal{P}^{(0,1)}=(P^{(0,1)},Q^{(0,1)},\dot{F},F)$ as follows: Let $P^{(0,1)}\subset P^{(0)}\times P^{(1)}$ be the abelian subgroup given by pairs $(p_{0},p_{1}),$ such that $j_{1}(p_{0})=j_{2}(p_{1}).$ I give it the structure of a $W(\overline{A}_{\Delta^{\text{std}}})$-module by
$$
a\cdot (p_{0},p_{1})=(i_{1}(a)\cdot p_{0},i_{2}(a)\cdot p_{1}).
$$
Next define $L^{(0,1)}\subset P^{(0)}/Q^{(0)}\times P^{(1)}/Q^{(1)}$ similiarly as pairs agreeing upon applications of $j_{1}$ resp. $j_{2}.$ Then $P^{(0,1)}$ resp. $L^{(0,1)}$ are free $W(\overline{A}_{\Delta^{\text{std}}})$ resp. $\overline{A}_{\Delta^{\text{std}}}$-modules of rank $4$ resp. $2.$ Then define the free $W(\overline{A}_{\Delta^{\text{std}}})$-module $Q^{(0,1)}\subset P^{(0,1)}$ by
$$
\ker(P^{(0,1)}\rightarrow L^{(0,1)}).
$$
The operators $\dot{F}^{(0)}, \dot{F}^{(1)}$ resp. $F^{(0)},F^{(1)}$ induce uniquely determined operators $\dot{F}^{(0,1)}\colon Q^{(0,1)}\rightarrow P^{(0,1)}$ and $F\colon P^{(0,1)}\rightarrow P^{(0,1)}.$ This completes the construction of the display $\mathcal{P}^{(0,1)}.$
\\
The extra structure on $\mathcal{P}^{(0)}$ resp. $\mathcal{P}^{(1)}$ induces the respective extra structure on $\mathcal{P}^{(0,1)},$ so that I get in total a special $\mathcal{O}_{D}$-display $(\mathcal{P}^{(0,1)},\Pi,\mathbb{Z}/d\mathbb{Z}-\text{grad},\rho).$ 
Note that one has the following commutative diagram
$$
\xymatrix{
P^{(0,1)}_{0} \ar[r]^{\Pi} \ar[d] & P^{(0,1)}_{1} \ar[r]^{\Pi} \ar[d] & P^{(0,1)}_{0} \ar[d] \\
\Lie(P^{(0,1)}_{0}) \ar[r]^{Y-Y^{p}} & \Lie(P^{(0,1)}_{1}) \ar[r]^{X-X^{p}} & \Lie(P^{(1)}_{0}).
}
$$
It remains to lift this construction to all $p$-adic thickenings. To do this, one has to give a lift of the Hodge-Filtration of $(\mathcal{P}^{(0,1)},\Pi, \mathbb{Z}/d\mathbb{Z}-\text{grad}).$ In order to prepare the use of Grothendieck-Messing deformation theory, I will perform the variabel transformation $Y-Y^{p}\mapsto Y^{\prime},$ $X-X^{p}\mapsto X^{\prime}$ (this is an isomorphism, since I inverted $1-X^{p-1}$ resp. $1-Y^{p-1}$).
\\
I consider the pd-thickening
$$
A_{\Delta^{\text{std}},n}\rightarrow A_{\Delta^{\text{std}},0},
$$
where $A_{\Delta^{\text{std}},n}=\breve{\mathbb{Z}}_{p}[X,Y,\frac{1}{1-X^{p-1}},\frac{1}{1-Y^{p-1}}]/(X\cdot Y -p, p^{n})$ and $A_{\Delta^{\text{std}},0}=k[X,Y,\frac{1}{1-X^{p-1}},\frac{1}{1-Y^{p-1}}]/(X\cdot Y).$

Then one finds a uniquely determined special $\mathcal{O}_{D}$-$\mathcal{W}(A_{\Delta^{\text{std}},n}/A_{\Delta^{\text{std}},0})$-window $(\tilde{\mathcal{P}^{(0,1)}},\Pi,\mathbb{Z}/d\mathbb{Z}-\text{grad}),$ which lifts the just constructed special $\mathcal{O}_{D}$-Display over $A_{\Delta,0}$ by Lemma \ref{Lemma zu relativen SFD Windows}. One finds a commutative diagram
$$
\xymatrix{
\tilde{\mathcal{P}^{(0,1)}}_{0}/I(A_{\Delta^{\text{std}},n}) \ar[d] \ar[r]^{\Pi} & \tilde{\mathcal{P}^{(0,1)}}_{1}/I(A_{\Delta^{\text{std}},n}) \ar[d] \ar[r]^{\Pi} & \tilde{\mathcal{P}^{(0,1)}}_{0}/I(A_{\Delta^{\text{std}},n}) \ar[d] \\
A_{\Delta^{\text{std}},n} \ar[r]^{\cdot X} & A_{\Delta^{\text{std}},n} \ar[r]^{\cdot Y} & A_{\Delta^{\text{\text{std}}},n},
}
$$
which after the variabel transformation one performed previously lifts the corresponding diagram relating the Hodge-Filtrations for $(\mathcal{P}^{(0,1)},\Pi, \mathbb{Z}/d\mathbb{Z}-\text{grad}).$ By the above Lemma \ref{Deformationstheorie von SFD-displays}, this suffices to construct a lift $(\mathcal{P}^{(0,1)}_{n},\Pi,\mathbb{Z}/d\mathbb{Z}-\text{grad})$ of $(\mathcal{P}^{(0,1)},\Pi, \mathbb{Z}/d\mathbb{Z}-\text{grad})$ to a special $\mathcal{O}_{D}$-display over $A_{\Delta^{\text{std}},n}.$ Since the quasi-isogeny lifts uniquely along nilpotent thickenings, one therefore has constructed a point $$(\mathcal{P}^{(0,1)}_{n},\Pi,\mathbb{Z}/d\mathbb{Z}-\text{grad},\rho_{n})\in \mathcal{D}^{(0)}(\Spec(A_{\Delta^{\text{std}},n})).$$
The reason I wrote before $\Delta^{\text{std}}$ is that this is the standard simplex in $\mathcal{B}T(\PGL_{2}(\mathbb{Q}_{p})):$ It is given by $\Lambda^{\text{std}}=\langle e_{1},e_{2}\rangle_{\mathbb{Z}_{p}},$ $\Lambda^{\prime \text{std}}=\langle pe_{1},e_{2} \rangle_{\mathbb{Z}_{p}}.$ Then I have that 
$$
\widehat{U}_{\Delta^{\text{std}}}=\Spf(A_{\Delta^{\text{std}}})
$$
(in general this isomorphism is non canonical, since it depends on the choice of a basis).
Using that the group $\GL_{2}(\mathbb{Q}_{p})$ acts transitively on maximal simplices in the building, one may then use the group-action to extend the construction of the special formal $\mathcal{O}_{D}$-Display, with quasi-isogeny, to all simplices. By construction, these displays agree on overlaps and one therefore has constructed a morphism
$$
\Upsilon\colon \widehat{\Omega}^{2}_{\breve{\mathbb{Z}_{p}}}\rightarrow \mathcal{D},
$$
which one forced to be equivariant for the $\GL_{2}(\mathbb{Q}_{p})$-action.
\section{Reductions}
I first start with a reduction lemma, which says the following: To show that the natural transformation
$$
\Upsilon\colon \widehat{\Omega}^{2}_{\breve{\mathbb{Z}}_{p}}\rightarrow \mathcal{D}
$$
is an isomorphism, it is enough to do so, when restricted to test-objects $\Spec(R)\in \Nilp_{\breve{\mathbb{Z}}_{p}},$ such that $pR=0.$ For this, let me fix the following notation: Let $\AlgNilp_{\breve{\mathbb{Z}}_{p},n}$ be the category of $\breve{\mathbb{Z}}_{p}$-algebras $R,$ such that $p^{n}R=0$ and let $\Upsilon_{n}$ be the restriction of $\Upsilon$ to these test-objects.
\begin{Lemma}
Let $n\geq 1$ be some integers and assume that $\Upsilon_{n}$ is an isomorphism, then also $\Upsilon_{n+1}$ is one.
\end{Lemma}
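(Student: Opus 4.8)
The plan is to carry out the usual deformation-theoretic bootstrap. It is enough to prove that $\Upsilon_{n+1}(R)$ is bijective for every $\breve{\mathbb{Z}}_{p}$-algebra $R$ with $p^{n+1}R=0$. Given such an $R$, I would set $R_{0}=R/p^{n}R$ and note that, because $n\geq 1$, the ideal $\mathfrak{a}=p^{n}R$ satisfies $\mathfrak{a}^{2}\subseteq p^{2n}R=0$; equipping $\mathfrak{a}$ with the divided powers inherited from the canonical ($p$-adic) divided powers on $\breve{\mathbb{Z}}_{p}$ — which are compatible with the standard ones on $p\mathbb{Z}_{p}$ — makes $R\twoheadrightarrow R_{0}$ into a pd-thickening to which Zink's relative Witt frame and Lemma \ref{Deformationstheorie von SFD-displays} apply. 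Since $p^{n}R_{0}=0$, the ring $R_{0}$ is a test-object for $\Upsilon_{n}$, so in the commutative square
$$
\xymatrix{
\widehat{\Omega}^{2}_{\breve{\mathbb{Z}}_{p}}(R) \ar[r]^{\Upsilon_{n+1}(R)} \ar[d] & \mathcal{D}(R) \ar[d] \\
\widehat{\Omega}^{2}_{\breve{\mathbb{Z}}_{p}}(R_{0}) \ar[r]^{\Upsilon_{n}(R_{0})} & \mathcal{D}(R_{0})
}
$$
the bottom row is bijective. An elementary diagram chase shows that $\Upsilon_{n+1}(R)$ is then bijective provided that, for each $\bar{x}\in \widehat{\Omega}^{2}_{\breve{\mathbb{Z}}_{p}}(R_{0})$ with image $\bar{y}=\Upsilon_{n}(\bar{x})$, the map between the fibre of the left vertical arrow over $\bar{x}$ and the fibre of the right vertical arrow over $\bar{y}$ induced by $\Upsilon_{n+1}(R)$ is a bijection; this is the statement I would aim for.

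Next I would make both fibres explicit via deformation theory. On the right: a quasi-isogeny lifts uniquely along a nilpotent thickening, so the fibre over $\bar{y}$ is the set of lifts of the special $\mathcal{O}_{D}$-display over $R_{0}$ classified by $\bar{y}$; by Lemma \ref{Deformationstheorie von SFD-displays} this is the set of $\Pi$-compatible families of surjections of line bundles $\tilde{P}_{i}/I(R)\tilde{P}_{i}\twoheadrightarrow L_{i}$ lifting the Hodge filtration $P_{i}/I(R_{0})P_{i}\twoheadrightarrow P_{i}/Q_{i}$, which is either empty or a torsor under $T^{\mathcal{D}}_{\bar{y}}\otimes_{R_{0}}\mathfrak{a}$ for an explicit finite projective $R_{0}$-module $T^{\mathcal{D}}_{\bar{y}}$ (the tangent module cut out of $\bigoplus_{i}\Hom_{R_{0}}(Q_{i}/I(R_{0})P_{i},P_{i}/Q_{i})$ by the $\Pi$-compatibility), and is empty precisely when an obstruction class in $O^{\mathcal{D}}_{\bar{y}}\otimes_{R_{0}}\mathfrak{a}$ is non-zero. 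On the left: $\bar{x}$ factors through an affine chart $\Spf(A_{\Delta})$, and I treat the case of a maximal simplex, where for $d=2$ the ring $A_{\Delta}$ is a localisation of the $p$-adic completion of $\breve{\mathbb{Z}}_{p}[X,Y]/(XY-p)$ (the vertex case being similar but simpler, the chart then being formally smooth over $\breve{\mathbb{Z}}_{p}$). Since $XY-p$ is a non-zero-divisor, $L_{A_{\Delta}/\breve{\mathbb{Z}}_{p}}\simeq[\,A_{\Delta}\xrightarrow{(Y,X)}A_{\Delta}^{2}\,]$ with vanishing $H^{-1}$, so lifting $\bar{x}=(\bar{x}_{0},\bar{y}_{0})$ (with $\bar{x}_{0}\bar{y}_{0}=p$) to an $R$-point amounts to lifting it to $(x,y)\in R^{2}$ with $xy=p$; the set of such lifts is empty exactly when the class of $x_{1}y_{1}-p$ (for any set-theoretic lift $(x_{1},y_{1})$) in $\mathfrak{a}/(\bar{x}_{0}\mathfrak{a}+\bar{y}_{0}\mathfrak{a})$ is non-zero, and is otherwise a torsor under $\{(u,v)\in\mathfrak{a}\oplus\mathfrak{a}:\bar{x}_{0}v+\bar{y}_{0}u=0\}=T^{\widehat{\Omega}}_{\bar{x}}\otimes_{R_{0}}\mathfrak{a}$.

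The heart of the argument — and the step I expect to be the main obstacle — is to show that $\Upsilon_{n+1}(R)$ identifies these two torsor-or-empty sets, i.e. that the module maps $T^{\widehat{\Omega}}_{\bar{x}}\to T^{\mathcal{D}}_{\bar{y}}$ and $O^{\widehat{\Omega}}_{\bar{x}}\to O^{\mathcal{D}}_{\bar{y}}$ induced functorially by $\Upsilon$ are isomorphisms and that the obstruction classes correspond. For this I would use the explicit description of $\Upsilon$ in section \ref{Section How to find displays} and in the basic construction: in the standard chart $\Delta^{\text{std}}$ one has, by construction, that $\Upsilon$ sends the coordinates $X,Y$ (subject to $XY=p$) precisely to the pair of graded line-bundle quotients of the $\mathcal{O}_{D}$-Hodge filtration, with the two maps induced by $\Pi$ on Lie algebras being multiplication by $X$ and by $Y$ and the relation $XY=p$ corresponding to $\Pi^{2}=\cdot p$; reading this off term by term yields the desired isomorphisms and matches the obstruction classes. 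Granting this, the obstruction classes match, so the two fibres are simultaneously empty or simultaneously non-empty, and in the latter case the induced map is an equivariant map of torsors over an isomorphism of structure modules, hence bijective. Therefore all the fibre maps are bijections, so $\Upsilon_{n+1}(R)$ is a bijection; since every point of $\widehat{\Omega}^{2}_{\breve{\mathbb{Z}}_{p}}(R_{0})$ factors through a $\GL_{2}(\mathbb{Q}_{p})$-translate of the standard chart and $\Upsilon$ is $\GL_{2}(\mathbb{Q}_{p})$-equivariant, the computation in one chart suffices, completing the inductive step.
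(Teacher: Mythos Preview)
Your overall structure is sound, but you take a harder road than the paper, and the step you flag as ``the main obstacle'' is precisely the one the paper sidesteps.

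For the torsor structure, you propose to compute $T^{\widehat{\Omega}}_{\bar{x}}$ and $T^{\mathcal{D}}_{\bar{y}}$ explicitly and to verify by hand that $\Upsilon$ induces an isomorphism between them. The paper instead observes that both fibres, when non-empty, are principal homogeneous spaces under the groups $\widehat{\Omega}^{2}_{f,\alpha}(B'[I])$ and $\mathcal{D}_{f,\Upsilon(\alpha)}(B'[I])$ respectively, where (in your notation $B=R$, $B'=R_{0}$) one sets $I=p^{n}B$ and $B'[I]=B'\oplus I$ with $I^{2}=0$. Since $p^{n}\cdot B'[I]=0$ (using $n\geq 1$), this ring lies in $\AlgNilp_{\breve{\mathbb{Z}}_{p},n}$, and the inductive hypothesis that $\Upsilon_{n}$ is an isomorphism already identifies the two groups --- no cotangent-complex or Hodge-filtration bookkeeping is needed at this stage.

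For non-emptiness, rather than matching obstruction classes, the paper argues directly that if the $\mathcal{D}$-fibre is non-empty then so is the $\widehat{\Omega}$-fibre: given a lift $y\in\mathcal{D}(\Spec(B))$, one reads off a pair $(c_{1},c_{2})\in B^{2}$ from the $\Pi$-action on the (free, rank-one) graded pieces of $P/Q$; the relation $\Pi^{2}=p$ forces $c_{1}c_{2}=p$, and by construction of $\Upsilon$ the pair reduces to $(\alpha_{1},\alpha_{2})$ modulo $p^{n}B$, so $(c_{1},c_{2})$ is already the desired point of $\widehat{U}_{\Delta}(\Spec(B))$.

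What each approach buys: yours, if the hand computation is actually carried out, would establish formal \'etaleness of $\Upsilon$ over arbitrary bases and render the later tangent-space Lemma~\ref{Lemma Upsilon ist etale} redundant; but doing that computation over a general $R_{0}$ is genuinely harder than over $k$. The paper's route is shorter here precisely because it recycles the inductive hypothesis, and it defers the only real tangent-space calculation to the end, where it is performed over $k$ alone via Dieudonn\'e modules.
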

\begin{proof}
I will follow the argument of Drinfeld (\cite[Prop. 2.5.]{DrinfeldOmega}); though of course here the natural transformation goes in the other direction. One first shows the following: let $\Delta$ be some simplex in the Bruhat-Tits tree for $\PGL_{2}(\mathbb{Q}_{p})$ and consider $B\in \AlgNilp_{\breve{\mathbb{Z}}_{p},n+1}$ and let $B^{\prime}=B/p^{n}B$ and $f\colon B\rightarrow B^{\prime}$ the quotient morphism. Fix a point $\alpha\in \widehat{U}_{\Delta}(\Spec(B^{\prime})),$ which corresponds to a pair $(\alpha_{1},\alpha_{2})\in B^{\prime},$ such that $\alpha_{1}\cdot \alpha_{2}=p$ and $1-\alpha_{1}^{p-1},1-\alpha_{2}^{p-1}\in (B^{\prime})^{*}.$ Let $\Upsilon_{\Delta}(\alpha)\in \mathcal{D}(\Spec(B^{\prime}))$ be the image of $\alpha.$ Then consider
$$
\widehat{U}_{\Delta f,\alpha}(B)=\lbrace \beta\in \widehat{U}_{\Delta}(\Spec(B))\colon f(\beta)=\alpha \rbrace
$$
and
$$
\mathcal{D}_{f,\Upsilon_{\Delta}(\alpha)}(B)=\lbrace y\in \mathcal{D}(\Spec(B))\colon f(y)=\Upsilon_{\Delta}(\alpha) \rbrace,
$$
so that one gets an induced morphism $\Upsilon_{\Delta}\colon \widehat{U}_{\Delta f,\alpha}(B)\rightarrow \mathcal{D}_{f,\Upsilon_{\Delta}(\alpha)}(B).$ The key point to verify is the following: if $\mathcal{D}_{f,\Upsilon_{\Delta}(\alpha)}(B)\neq \emptyset,$ then also $\widehat{U}_{\Delta f,\alpha}(B)\neq \emptyset.$
\\
Let $y\in \mathcal{D}_{f,\Upsilon_{\Delta}(\alpha)}(B),$ which corresponds to a pair $((\mathcal{P},\Pi,\mathbb{Z}/d\mathbb{Z}-\text{grad}),\rho)$ over $B.$ The quasi-isogeny always deforms uniquely along nilpotent thickening. Let me denote by $f^{*}((\mathcal{P},\Pi,\mathbb{Z}/d\mathbb{Z}-\text{grad}))$ the base-change along $f\colon B\rightarrow B^{\prime}.$ Then by assumption, there is an isomorphism $f^{*}((\mathcal{P},\Pi,\mathbb{Z}/d\mathbb{Z}-\text{grad}))\simeq \mathcal{P}_{\Upsilon(\alpha)}.$ Recall that all graded-pieces of $P_{\Upsilon(\alpha)}/Q_{\Upsilon(\alpha)}$ are by construction free $B^{\prime}$-modules of rank $1.$ It follows therefore, that $P_{i}/Q_{i}\otimes_{B} B^{\prime}\simeq P_{i,\Upsilon(\alpha)}/Q_{i,\Upsilon(\alpha)}$ are also free of rank $1.$ Now, since $P_{i}/Q_{i}$ is finite-projective over $B,$ so that $\text{Tor}^{1}_{B}(B^{\prime},P_{i}/Q_{i})=0$ and the ideal $p^{n}B$ is nilpotent, it follows that also $P_{i}/Q_{i}$ is free of rank $1$ (c.f. \cite[Tag 051H]{stacks}). I consider the diagram
$$
\xymatrix{
P_{0}/I(B)P_{0} \ar[r]^{\Pi} \ar[d] & P_{1}/I(B)P_{1} \ar[r]^{\Pi} \ar[d] & P_{0}/I(B)P_{1} \ar[d] \\
P_{0}/Q_{0} \ar[r]^{\cdot c_{1}} & P_{1}/Q_{1} \ar[r]^{\cdot c_{2}} & P_{0}/Q_{0},
}
$$
where $c_{1},c_{2}\in B$ and the commutativity of this diagram says that $c_{1}\cdot c_{2}=p.$ Again, by the assumption that $f^{*}((\mathcal{P},\Pi,\mathbb{Z}/d\mathbb{Z}-\text{grad}))\simeq \mathcal{P}_{\Upsilon(\alpha)}$ and the fact that in the commutative diagram for $\mathcal{P}_{\Upsilon(\alpha)}$ as the one I used just now, the lower horizontal maps are given by multiplication with $\alpha_{1}$ resp. $\alpha_{2},$ one sees that $f(c_{1})=\alpha_{1}$ and $f(c_{2})=\alpha_{2}.$ This implies in particular, that $1-c_{1}^{p-1}$ and $1-c_{2}^{p-1}$ are invertible in $B$ (since this is a condition one may check modulo a nilpotent ideal). In total, one constructed the desired morphism
$$
\Spec(B)\rightarrow \widehat{U}_{\Delta, f, \alpha}.
$$
Now, since $\Upsilon$ is constructed via glueing of all the $\Upsilon_{\Delta},$ one also gets (with the obvious meaning of this notation) the following: If $\mathcal{D}_{f,\Upsilon(\alpha)}(\Spec(B))\neq \emptyset,$ then also $\widehat{\Omega}^{2}_{f,\alpha}(\Spec(B))\neq \emptyset.$
\\
From here, one may run the same argument as Drinfeld does: It is enough to show that $\Upsilon$ induces an isomorphism between $\widehat{\Omega}^{2}_{f,\alpha}(\Spec(B))$ and $\mathcal{D}_{f,\Upsilon(\alpha)}(\Spec(B)).$ By deformation theory one sees that if $\widehat{\Omega}^{2}_{f,\alpha}(\Spec(B))\neq \emptyset$ resp. $\mathcal{D}_{f,\Upsilon(\alpha)}(\Spec(B))\neq \emptyset,$ then these are principal homogenouse spaces under the groups $\widehat{\Omega}^{2}_{f,\alpha}(B^{\prime}[I])$ resp. $\mathcal{D}_{f,\Upsilon(\alpha)}(B^{\prime}[I]),$ where $I=p^{n}B, B^{\prime}[I]=B^{\prime}\oplus I,$ with $I^{2}=0.$ From the assumption that $\Upsilon_{n}$ is an isomorphism, one deduces then that both groups are isomorphic and by the previous step one can then conclude that $\Upsilon$ induces an isomorphism between $\widehat{\Omega}^{2}_{f,\alpha}(\Spec(B))$ and $\mathcal{D}_{f,\Upsilon(\alpha)}(\Spec(B)),$ as desired.
\end{proof}
One is therefore left with showing that
$$
\Upsilon_{0}\colon \widehat{\Omega}^{2}_{\breve{\mathbb{Z}}_{p}}\times_{\Spf(\breve{\mathbb{Z}}_{p})}\Spec(\overline{\mathbb{F}}_{p})\rightarrow \mathcal{D}\times_{\Spf(\breve{\mathbb{Z}}_{p})}\Spec(\overline{\mathbb{F}}_{p})
$$
is an isomorphism. Here, I will use the following, almost trivial, statement:
\begin{Lemma}
Let $k$ be an algebraically closed field and $f\colon X\rightarrow Y$ be a morphism between projective varieties over $\Spec(k).$ Assume that $f$ is a bijection on $\Spec(k)$-valued points and in addition étale, then $f$ is an isomorphism.
\end{Lemma}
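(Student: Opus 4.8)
The plan is to show that $f$ is a finite étale morphism of degree one, and then to conclude directly. Since $f$ is étale it is flat, locally of finite presentation, and has discrete fibres, hence in particular quasi-finite. As $X$ is projective — hence proper — over $k$, and $Y$ is separated over $k$, the morphism $f$ is itself proper. A proper quasi-finite morphism is finite by Zariski's main theorem (\cite[Tag 02OG]{stacks}), so $f$ is finite and étale; consequently $f_{*}\mathcal{O}_{X}$ is a finite locally free $\mathcal{O}_{Y}$-algebra, whose rank is a locally constant function $n$ on $Y$.

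\emph{The rank is one.} Fix $y\in Y(k)$. Since $f$ is finite and étale, the scheme-theoretic fibre $X_{y}=X\times_{Y}\Spec(k)$ is the spectrum of a finite étale $k$-algebra, and as $k$ is algebraically closed this algebra is isomorphic to $k^{m}$, where $m=n(y)$ is the rank of $f_{*}\mathcal{O}_{X}$ at $y$; moreover $m$ equals the number of $k$-points of $X$ lying above $y$. Now $f$ is finite, hence closed, so surjectivity of $f$ on $k$-points (together with the fact that varieties over $k$ are Jacobson) forces $f$ to be surjective, and injectivity of $f$ on $k$-points then forces $m=1$ for every closed point $y\in Y$. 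Therefore $n\equiv 1$ on $Y$ (if one does not wish to assume $Y$ connected, apply this argument on each connected component).

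\emph{Conclusion.} A finite morphism for which $f_{*}\mathcal{O}_{X}$ is locally free of rank one has structure map $\mathcal{O}_{Y}\to f_{*}\mathcal{O}_{X}$ an isomorphism of $\mathcal{O}_{Y}$-algebras — a rank-one locally free algebra equipped with its unit section is the structure sheaf itself — and hence $f$ is an isomorphism. The argument is entirely standard, and I do not anticipate a genuine obstacle: the only points that demand a little care are the appeal to Zariski's main theorem used to upgrade ``proper and quasi-finite'' to ``finite'', and, in the second step, the passage from the hypothesis ``bijective on $k$-valued points'' to honest scheme-theoretic surjectivity of $f$ and to the constancy of the degree — both of which are handled by the finiteness (hence closedness) of $f$ together with the Jacobson property of varieties over $k$.
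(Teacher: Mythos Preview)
Your proof is correct and follows essentially the same route as the paper's: both establish that $f$ is proper (via separatedness of $Y$ and properness of $X$ over $k$), observe that étale implies quasi-finite, deduce finiteness from proper plus quasi-finite, and conclude that the bijection on $k$-points forces the degree to be one. Your version is simply more detailed in justifying the degree computation and the final step.
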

\begin{proof}
Let $s\colon X\rightarrow \Spec(k)$ and $t\colon Y\rightarrow \Spec(k)$ be the two structure-morphisms. Since $t$ is seperated and $s=t\circ f$ is proper, also $f$ is proper. Now an étale morphism is in particular quasi-finite and quasi-finite plus proper implies finite. Since $k$ is algebraically closed and $f(k)$ is a bijection, it follows that $f$ is finite étale of degree $1,$ i.e. an isomorphism.
\end{proof}
\begin{Lemma}
The functor $\mathcal{D}\times_{\Spf(\breve{\mathbb{Z}}_{p})}\Spec(\overline{\mathbb{F}}_{p})$ can be written as $\colim_{n,m\in \mathbb{Z}}\overline{\mathcal{M}}_{n,m},$ where $\mathcal{D}_{0, n,m}$ are subfunctors, which are representable by projective $\overline{\mathbb{F}}_{p}$-schemes.
\end{Lemma}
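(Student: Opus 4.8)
The plan is to stratify $\mathcal{D}_{0}:=\mathcal{D}\times_{\Spf(\breve{\mathbb{Z}}_{p})}\Spec(\overline{\mathbb{F}}_{p})$ by the denominators of the quasi-isogeny $\rho$, to see that each stratum is a scheme of finite type via the reformulation by displays, and then to recognise it — using the morphism $\Upsilon_{0}$ constructed in the previous section — as the image of a finite union of projective lines, hence as a proper, and therefore projective, curve over $\overline{\mathbb{F}}_{p}$. Concretely, for $R\in\Nilp_{\overline{\mathbb{F}}_{p}}$ a point of $\mathcal{D}_{0}(R)$ is a tuple $(\mathcal{P},\Pi,\mathbb{Z}/d\mathbb{Z}-\text{grad},\rho)$ with $\rho\colon N\otimes_{W_{\mathbb{Q}}(k)}W_{\mathbb{Q}}(R)\simeq\mathcal{P}[1/p]$ a height-$0$ isomorphism of $\mathcal{O}_{D}$-isodisplays (Lemma~\ref{Beschreibung von SFD via Displays} and the lemmas following it), and via $\rho$ the display $\mathcal{P}=(P,Q,F,\dot{F})$ is recovered from the pair $Q\subseteq P\subseteq N_{R}:=N\otimes_{W_{\mathbb{Q}}(k)}W_{\mathbb{Q}}(R)$ with $P$ a $W(R)$-lattice, $P[1/p]=N_{R}$, and $F,\dot{F},\Pi$ and the grading induced from $N$. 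I fix once and for all a $W$-lattice $M_{0}\subseteq N$ that is stable under $F$, $V$, $\Pi$ and the grading, for instance the Dieudonné lattice $M(\mathbb{X})$, which in the trivialisation of Section~\ref{Section How to find displays} is $\bigoplus_{i}\langle e_{1},e_{2}\rangle_{W}$.

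For integers $n,m$ with $n+m\geq 0$, let $\overline{\mathcal{M}}_{n,m}\subseteq\mathcal{D}_{0}$ be the subfunctor of those $(\mathcal{P},\Pi,\mathbb{Z}/d\mathbb{Z}-\text{grad},\rho)$ for which $p^{n}\bigl(M_{0}\otimes_{W}W(R)\bigr)\subseteq P\subseteq p^{-m}\bigl(M_{0}\otimes_{W}W(R)\bigr)$ inside $N_{R}$, i.e.\ for which the quasi-isogeny $\rho$ has denominators bounded by $(n,m)$. Since the locus over the base where a quasi-isogeny of $p$-divisible groups becomes a genuine isogeny is open and closed, $\overline{\mathcal{M}}_{n,m}$ is a quasi-compact (open and) closed subfunctor of $\mathcal{D}_{0}$, the transition maps for $n'\geq n$, $m'\geq m$ are closed immersions, and — as over a field every quasi-isogeny has a finite bound which is locally constant on and bounded over quasi-compact opens of the base — the $\overline{\mathcal{M}}_{n,m}$ exhaust $\mathcal{D}_{0}$ Zariski-locally, so that $\colim_{n,m}\overline{\mathcal{M}}_{n,m}\to\mathcal{D}_{0}$ is a monomorphism which is an epimorphism of fppf sheaves, hence an isomorphism. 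By the general representability theorem of Rapoport--Zink \cite{RZ} (which is independent of Drinfeld's theorem), $\mathcal{D}_{0}$ is a scheme locally of finite type over $\overline{\mathbb{F}}_{p}$, so each $\overline{\mathcal{M}}_{n,m}$ is a separated scheme of finite type; alternatively one sees this directly, translating $\overline{\mathcal{M}}_{n,m}$ into moduli of the finite-level data $\overline{P}=P/p^{n}(M_{0}\otimes_{W}W(R))\subseteq M_{0}\otimes_{W}W_{n+m}(R)$ together with the Hodge filtration, the operator $\Pi$ with $\Pi^{d}=\cdot p$, the grading and the rank-$1$ conditions on the graded pieces of $P/Q$ (nilpotency being automatic by the footnote to Lemma~\ref{Beschreibung von SFD via Displays}), which cut out a locally closed subscheme of a finite product of Grassmannians passed through the Greenberg functor at the finite level $n+m$ — a genuine, non-perfect, finite-type $\overline{\mathbb{F}}_{p}$-scheme.

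It remains to see each $\overline{\mathcal{M}}_{n,m}$ is projective. Here I would use the morphism $\Upsilon_{0}\colon\overline{\Omega}^{2}_{k}\to\mathcal{D}_{0}$ constructed in the previous section, which by construction is bijective on $\overline{\mathbb{F}}_{p}$-points. As $\overline{\mathcal{M}}_{n,m}\hookrightarrow\mathcal{D}_{0}$ is a closed subfunctor, $Z_{n,m}:=\Upsilon_{0}^{-1}(\overline{\mathcal{M}}_{n,m})$ is a closed subscheme of $\overline{\Omega}^{2}_{k}$; the bound on the denominators controls how far the occurring lattices $M_{i}^{V^{-1}\Pi}$ lie from $M_{0}$ in the Bruhat--Tits tree, so $Z_{n,m}$ is contained in a finite subtree and is therefore a finite union of closed subschemes of projective lines, in particular projective. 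The restriction $\Upsilon_{0}\colon Z_{n,m}\to\overline{\mathcal{M}}_{n,m}$ is surjective on $\overline{\mathbb{F}}_{p}$-points, hence dominant and topologically surjective; since $\overline{\mathcal{M}}_{n,m}$ is separated, the image of the proper scheme $Z_{n,m}$ is closed, so $\overline{\mathcal{M}}_{n,m}$ (or at least its reduction) is proper over $\overline{\mathbb{F}}_{p}$, and of dimension $\leq 1$. A proper $\overline{\mathbb{F}}_{p}$-scheme of dimension $\leq 1$ is projective (its reduction being a proper, hence projective, curve), so each $\overline{\mathcal{M}}_{n,m}$ is projective and we are done.

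The step I expect to require the most care is the identification of $Z_{n,m}=\Upsilon_{0}^{-1}(\overline{\mathcal{M}}_{n,m})$ with a quasi-compact (finite-subtree) piece of $\overline{\Omega}^{2}_{k}$ and, relatedly, the passage from "bijective on $\overline{\mathbb{F}}_{p}$-points" to a scheme-theoretic statement strong enough to deduce properness — a clean way is to work throughout with reduced structures, or to check that $\overline{\mathcal{M}}_{n,m}$ is reduced. If one prefers to avoid invoking $\Upsilon_{0}$ at this point, the alternative is to prove properness of $\overline{\mathcal{M}}_{n,m}$ directly by the valuative criterion: extending a bounded family of special $\mathcal{O}_{D}$-displays over $\mathrm{Frac}(\mathcal{O})$ across the closed point of a discrete valuation ring $\mathcal{O}$ over $\overline{\mathbb{F}}_{p}$ with algebraically closed residue field, using the rigidity furnished by the single $\mathcal{O}_{D}$-linear isogeny class over $k$ (Remark~\ref{Remark zu SFD}(ii)) together with the Drinfeld and special conditions. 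That direct extension argument — genuinely establishing the extension over $\mathcal{O}$ rather than merely over the explicitly written-down charts — is the delicate, Drinfeld-independent input, and is where I would expect the real difficulty to lie.
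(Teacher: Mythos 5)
Your stratification of $\mathcal{D}_{0}$ by bounding the denominators of $\rho$ (equivalently, by when $p^{n}\rho$ becomes an isogeny) is the same decomposition the paper uses, but your proof of projectivity of the strata is genuinely different. The paper argues directly and intrinsically: once $p^{n}\rho$ is an isogeny, its kernel is a finite flat subgroup scheme of $\mathbb{X}[p^{N}]_{R}$ of bounded order, so the stratum embeds as a closed subfunctor of a (projective) Hilbert scheme of subgroup schemes — this is the standard Rapoport--Zink mechanism and uses nothing about $\Upsilon_{0}$. You instead take finite-typeness and separatedness of the strata as an input (citing \cite{RZ}, or a Greenberg-functor sketch), and then deduce properness by exhibiting each stratum as the surjective image, under $\Upsilon_{0}$, of a closed subscheme of a finite union of projective lines, concluding with ``image of a universally closed scheme under a surjection onto a separated finite-type scheme is proper'' and ``a proper scheme of dimension $\leq 1$ over a field is projective.'' This chain of implications is correct (surjectivity on $k$-points suffices since the target is Jacobson, and universal closedness is insensitive to nilpotents), and it is not circular, since the bijectivity of $\Upsilon_{0}(k)$ and the properness of $\overline{\Omega}^{2}_{k,n,m}$ are established independently of this lemma. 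What it buys is the avoidance of any Hilbert-scheme argument at this point; what it costs is (i) a dependence of the ``Reductions'' section on the construction of $\Upsilon_{0}$ and on the not-entirely-trivial claim that $\Upsilon_{0}^{-1}(\overline{\mathcal{M}}_{n,m})$ lies over a finite subtree, which you rightly flag but should actually verify from the explicit displays of Section \ref{Section How to find displays}, and (ii) the fact that your cited input \cite{RZ} already contains, in its proof, exactly the Hilbert-scheme projectivity argument you are circumventing, so the gain is organisational rather than logical. Two small corrections: the condition that $p^{n}\rho$ be an isogeny is closed but not in general open, so drop ``open and''; and note that in the paper's indexing the height $m$ of $p^{n}\rho$ is already pinned down by $n$ and the height-$0$ normalisation of $\rho$, so the double index is largely decorative.
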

\begin{proof}
If I take a point $(X,\rho)\in \mathcal{D}_{0}(\Spec(R)),$ $R$ a $k$-algebra, then I find some $n,$ such that $p^{n}\rho\colon \mathbb{X}_{R}\rightarrow X$ is a $\mathcal{O}_{D}$-linear isogeny. Then let $\mathcal{D}_{0,n,m}$ be the subfunctor parametrizing pairs $(X,\rho),$ such that $p^{n}\rho$ is a $\mathcal{O}_{D}$-linear isogeny of height $m.$ Then it follows that $\mathcal{D}_{0}=\colim_{n,m}\mathcal{D}_{0,n,m}$ and using the representability of the Hilbert-scheme it is easy to see that $\mathcal{D}_{0,n,m}$ are representable by projective schemes. 
\end{proof}
Now let $((\mathcal{P}_{\widehat{\Omega}},\Pi,\mathbb{Z}/d\mathbb{Z}-\text{grad}),\rho_{\widehat{\Omega}})$ be the object one constructed before over $ \widehat{\Omega}^{2}_{\breve{\mathbb{Z}}_{p}}.$ Then one can look at the sublocus of $\Omega^{2}_{k},$ where $p^{n}\rho_{\Omega}$ is a $\mathcal{O}_{D}$-linear isogeny of height $m;$ let me denote it by $ \Omega^{2}_{k,n,m}.$ This a projective $k$-scheme which is locally closed inside the whole $\Omega^{2}_{k}$, such that $\Omega^{2}_{k}=\colim_{n,m}\Omega^{2}_{k,n,m}$ and the natural transformation respects this union, i.e. one has
$$
\Upsilon_{0}=\colim_{n,m}\Upsilon_{0,n,m},
$$
where $\Upsilon_{0,n,m}\colon \Omega^{2}_{k,n,m}\rightarrow \mathcal{D}_{0,n,m}$ is the induced morphism, to which one is then able to apply the above Lemma.
In total, after these reductions one is left with showing that $\Upsilon(k)$ is a bijection and that $\Upsilon_{0}$ is étale.
\section{End of the proof}
\subsection{The special fiber}
\begin{Lemma}
The map $\Upsilon(k)$ is a bijection.
\end{Lemma}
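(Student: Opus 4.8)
The plan is to show that $\Upsilon(k)$ is inverse to the map $\Phi_{k}\colon \mathcal{D}(k)\rightarrow \widehat{\Omega}^{2}_{\breve{\mathbb{Z}}_{p}}(k)$ of Drinfeld recalled in Section \ref{Section How to find displays}. Since $\Phi_{k}$ is already established as a well-defined map of sets in the opposite direction (the verification of the condition (Deligne) was carried out there), it suffices to check that $\Upsilon(k)\circ \Phi_{k}=\mathrm{id}$ and $\Phi_{k}\circ \Upsilon(k)=\mathrm{id}$. Because $\widehat{\Omega}^{2}_{\breve{\mathbb{Z}}_{p}}(k)=\bigcup_{\Delta}\widehat{U}_{\Delta}(k)$ and the whole construction of $\Upsilon$ is $\GL_{2}(\mathbb{Q}_{p})$-equivariant, it is enough to do this on the $k$-points of each chart $\widehat{U}_{\Delta}$, and after translating by the group, on $\widehat{U}_{\Delta^{\mathrm{std}}}$ together with its subsimplices.

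First I would make precise that, on $k$-points, $\Upsilon(k)$ is exactly the reconstruction algorithm of Section \ref{Section How to find displays}. By Dieudonné theory together with Lemma \ref{Beschreibung von SFD via Displays}, a $k$-point of $\mathcal{D}$ is the same datum as a special $\mathcal{O}_{D}$-Dieudonné module $(M,V,\Pi,\mathbb{Z}/d\mathbb{Z}\text{-grad})$ equipped with a grading- and $\Pi$-compatible embedding $M\hookrightarrow N$ of height $0$. Specializing the family $(\mathcal{P}^{(0,1)},\Pi,\mathbb{Z}/d\mathbb{Z}\text{-grad},\rho)$ built in Section 4 along a $k$-point of $\widehat{U}_{\Delta^{\mathrm{std}}}$ produces, by the very way the lattices $P^{(0)}_{i}, Q^{(0)}_{i}$ and their Frobenius twists $W\otimes_{F,W}I$ were written down, precisely the Dieudonné module output by that algorithm; the only thing to check here is that this specialization is faithful, i.e.\ that the formulas involving $[X],[X^{p}],[\lambda^{[d-2]}]$ and so on genuinely reproduce the $W\otimes_{F,W}(\cdot)$ steps of the recipe.

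For $\Phi_{k}\circ \Upsilon(k)=\mathrm{id}$ I would start from $x\in \widehat{U}_{\Delta}(k)$ with its normalized chain of lattices $\Lambda_{i_{1}}\subset\dots\subset\Lambda_{i_{r}}$ and let $M$ be the Dieudonné module the algorithm attaches to $x$. Two things must be verified: that the critical indices of $M$ are exactly $i_{1},\dots,i_{r}$, and that $M_{i_{k}}^{V^{-1}\Pi}=\Lambda_{i_{k}}$ for each $k$, whence $\Phi_{k}(M)=x$. The first is forced by construction — at the critical steps one has $M_{i_{k}}=\Lambda_{i_{k}}\otimes W(k)$ with $\Lie_{i_{k}}$ the prescribed line, and at the intermediate steps the induced map on $\Lie$-algebras is an isomorphism by the definition of $I_{i_{k}-\ell}$ — and it is exactly the condition (Deligne) at $x$ that prevents a spurious extra critical index from appearing, so that the recovered simplex is $\Delta$. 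The second follows because $V^{-1}\Pi=U$ acts on $M_{i_{k}}$ as the given operator on $N$, whose fixed lattice is $\Lambda_{i_{k}}$ by the Eigenbasis lemma.

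I expect the \emph{main obstacle} to be $\Upsilon(k)\circ \Phi_{k}=\mathrm{id}$: starting from $(M,\rho)\in\mathcal{D}(k)$ with critical indices $i_{1},\dots,i_{r}$ and $\Lambda_{i_{k}}=M_{i_{k}}^{V^{-1}\Pi}$, one must show the algorithm applied to $\Phi_{k}(M)$ reconstructs $M$ — not just the underlying lattices but also the operators $V,\Pi,F,\dot{F}$. At a critical index this is immediate. Descending one step, $I_{i_{k}-1}=\ker(M_{i_{k}}\to\Lie_{i_{k}})=VM_{i_{k}-1}$ by the display structure, so $W\otimes_{F,W}I_{i_{k}-1}=W\otimes_{F,W}VM_{i_{k}-1}$ is identified with the original $M_{i_{k}-1}$ via $w\otimes Vm\mapsto\sigma^{-1}(w)m$ compatibly with $V$, and $\Pi$ is recovered since $\Pi=U\circ V$ on $M$ as well. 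For the further steps towards $i_{k-1}+1$ one invokes non-criticality of the intermediate indices: by the snake-lemma arguments of Section \ref{Section How to find displays} the maps $\overline{V}$ on $M_{j}/\Pi M_{j-1}$ are then isomorphisms, which is precisely what makes $I_{i_{k}-2}=\{m\in M_{i_{k}-1}:\Pi(m)\in I_{i_{k}-1}\}$ equal to $\ker(M_{i_{k}-1}\to\Lie_{i_{k}-1})$, and so on down the chain. For $d=2$ there are at most two graded pieces and $r\leq 2$, so the bookkeeping of $\sigma$-twists and of the operators stays manageable; conceptually the statement is just that the two explicit recipes of Section \ref{Section How to find displays} are mutually inverse. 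Combining the two composites yields that $\Upsilon(k)$ is a bijection on every chart, and hence, by the gluing and equivariance noted above, on all of $\widehat{\Omega}^{2}_{\breve{\mathbb{Z}}_{p}}(k)$.
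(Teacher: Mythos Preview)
Your proposal is correct and follows exactly the approach intended by the paper: the construction of $\Upsilon_{0}$ in Sections \ref{Section How to find displays} and 4 was designed so that on $k$-points it implements the inverse of Drinfeld's map $\Phi_{k}$, and you have carefully unpacked what this means by checking the two composites. The paper's own proof is the single sentence ``This property was insured by construction'', deferring entirely to the fact that the algorithm of Section \ref{Section How to find displays} was built as a set-theoretic inverse to $\Phi_{k}$ and then packaged into the families $\mathcal{P}^{(0)}$, $\mathcal{P}^{(1)}$, $\mathcal{P}^{(0,1)}$; your write-up is thus a detailed expansion of that one line rather than a different argument.
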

\begin{proof}
This property was insured by construction.
\end{proof}
\subsection{Deformationtheory}
The aim of this section is to show the following
\begin{Lemma}\label{Lemma Upsilon ist etale}
The morphism $\Upsilon_{0}$ is étale.
\end{Lemma}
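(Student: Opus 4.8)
The plan is to verify étaleness through the infinitesimal lifting criterion. By the reductions of the previous section it suffices to prove that $\Upsilon_0$ is formally étale; local finite presentation is automatic, since $\overline{\Omega}^2_k$ and $\mathcal{D}_0$ are exhausted by the finite type $\overline{\mathbb{F}}_p$-schemes $\Omega^2_{k,n,m}$ and $\mathcal{D}_{0,n,m}$. So I would fix a square-zero extension of $\overline{\mathbb{F}}_p$-algebras $R'\twoheadrightarrow R$ with kernel $\mathfrak{a}$, $\mathfrak{a}^2=0$, a point $x\in\overline{\Omega}^2_k(R)$, and a lift $\tilde{y}\in\mathcal{D}_0(R')$ of $\Upsilon_0(x)$, and then produce a unique $\tilde{x}\in\overline{\Omega}^2_k(R')$ lifting $x$ with $\Upsilon_0(\tilde{x})=\tilde{y}$. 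Since $\Spec R\to\Spec R'$ is a homeomorphism and $\Upsilon$ is $\GL_2(\mathbb{Q}_p)$-equivariant with the group acting transitively on maximal simplices, I may assume $x$ factors through the chart $\overline{U}_{\Delta^{\text{std}}}$, so that $x$ amounts to a pair $(a_1,a_2)\in R^2$ with $a_1a_2=0$ and $1-a_i^{p-1}$ invertible, and a lift $\tilde{x}$ amounts to a pair $(a_1',a_2')\in(R')^2$ reducing to $(a_1,a_2)$ with $a_1'a_2'=0$ — the invertibility of $1-(a_i')^{p-1}$ holding automatically modulo the nilpotent ideal $\mathfrak{a}$.

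Next I would rewrite the $\mathcal{D}_0$-side of the problem in the language of displays. Give $\mathfrak{a}$ the trivial divided power structure, which is compatible with the one on $p\mathbb{Z}_p$ since $p=0$ in $R'$; then $R'\twoheadrightarrow R$ is a pd-thickening. The display underlying $\Upsilon_0(x)$ is, by construction of $\Upsilon$, the pullback $x^*(\mathcal{P}^{(0,1)},\Pi,\mathbb{Z}/d\mathbb{Z}-\text{grad})$, so Lemma~\ref{Lemma zu relativen SFD Windows} furnishes its unique lift to a $\mathcal{W}(R'/R)$-window, and by Lemma~\ref{Deformationstheorie von SFD-displays} the datum of $\tilde{y}$ (the quasi-isogeny lifting uniquely along a nilpotent thickening) is the same as a pair of rank-one locally free quotients $\tilde{P}_i/I(R')\tilde{P}_i\twoheadrightarrow L_i$, $i=0,1$, reducing mod $\mathfrak{a}$ to the Hodge filtration of $x^*\mathcal{P}^{(0,1)}$ and fitting into the $\Pi$-diagram. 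Here I would use the explicit family $\mathcal{P}^{(0,1)}$ built above: after the variable change, its Hodge diagram has lower horizontal maps $\cdot X$ and $\cdot Y$ with $XY=p$, so trivializing the line bundles $L_i$ (which lift uniquely along a square-zero thickening) identifies such a lift with a pair $(a_1',a_2')\in(R')^2$ reducing to $(a_1,a_2)$ and satisfying $a_1'a_2'=0$; the $\Pi$-compatibility and the rank-one conditions impose nothing further, because at each point exactly one index is critical and the explicit shape of $P^{(0)}$ resp. $P^{(1)}$ shows that this rigidifies the corresponding quotient.

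Comparing the two descriptions, $\tilde{x}\mapsto\Upsilon_0(\tilde{x})$ is the identity on such pairs $(a_1',a_2')$, so the lift $\tilde{x}$ of $x$ with $\Upsilon_0(\tilde{x})=\tilde{y}$ exists and is unique; hence $\Upsilon_0$ is formally étale, and therefore étale. The hard part will be the middle step: one must check honestly that lifting the Hodge filtration $\Pi$-equivariantly with all graded pieces of rank one carries no more moduli than the pair $(a_1',a_2')$ — that the critical index removes exactly the spurious $\mathbb{P}^1$ of choices — and it is precisely for this bookkeeping, rather than for any formal reason, that one needs the concrete displays $\mathcal{P}^{(0)}$, $\mathcal{P}^{(1)}$, $\mathcal{P}^{(0,1)}$ constructed above instead of a black-box Grothendieck-Messing comparison.
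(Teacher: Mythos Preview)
Your framework is the same one the paper uses --- Grothendieck--Messing deformation theory for special $\mathcal{O}_D$-displays (Lemma~\ref{Deformationstheorie von SFD-displays}) to translate lifts of $\Upsilon_0(x)$ into $\Pi$-equivariant lifts of the Hodge filtration --- and you correctly flag the ``middle step'' as the crux. But your proposed resolution of that step does not hold up.

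First, the claim ``at each point exactly one index is critical'' is simply false: at the $\mathbb{F}_p$-rational points of $\overline{U}_{\Delta^{\mathrm{std}}}$ (i.e.\ where $a_1=a_2=0$) \emph{both} indices are critical, and the tangent spaces on both sides are $2$-dimensional. The paper treats this case separately.

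Second, and more seriously, you assert that trivializing the $L_i$ identifies a $\Pi$-equivariant Hodge-filtration lift with a pair $(a_1',a_2')$, but this conflates two different things. In Lemma~\ref{Deformationstheorie von SFD-displays} the moduli of lifts lies in the choice of the surjections $\tilde{P}_i/I(R')\tilde{P}_i\twoheadrightarrow L_i$; once those are fixed, the maps $L_i\to L_{i+1}$ are \emph{determined} by $\Pi$ through condition~(a), not freely chosen. Your pair $(a_1',a_2')$ is the induced bottom map, hence a function of the quotient choice --- so what you have really defined is a map $\{\text{Hodge-filtration lifts}\}\to\{\text{pairs}\}$, and the whole content of the lemma is that this map is a bijection. ``The explicit shape of $\mathcal{P}^{(0)}$ resp.\ $\mathcal{P}^{(1)}$ rigidifies the quotient'' is not a proof of this.

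The paper executes this step by restricting to $R=k$, $R'=k[\epsilon]$: it identifies $T_{\mathcal{D},\Upsilon(x)}$ with the space of degree-zero $\Pi$-commuting $u\in\Hom_k(Q/pP,(\epsilon)\otimes_k P/Q)$, computes its dimension ($1$ or $2$ according to the number of critical indices), matches this with $\dim_k T_{\overline{\Omega}^2_k,x}$, and then checks by an explicit structural-matrix computation that $d\Upsilon\neq 0$ in the one-critical case, bootstrapping to the two-critical case by restricting to the two coordinate lines $\alpha_i=0$. Your plan would be completed by precisely such a computation --- but that computation is the missing content, not bookkeeping.
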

To this end, I will first study lifts of a point $x=(\mathcal{P},\Pi,\mathbb{Z}/2\mathbb{Z}-\text{grad},\rho)\in \mathcal{D}(\Spec(k))$ towards $k[\epsilon],$ $\epsilon^{2}=0.$ By rigidity of quasi-isogeny, this means that I have to understand possible lifts of the $\mathcal{O}_{D}$-display $(\mathcal{P},\Pi,\mathbb{Z}/2\mathbb{Z}-\text{grad}).$ The next statement is easy to check:
\begin{Lemma}\label{Lemma zu Tangentialraumen im Drinfeld Modulproblem}
The tangent space $T_{\mathcal{D},x}$ of $\mathcal{D}$ at the point $x$ is isomorphic to the $k$-vector space
$$
\lbrace u\in \Hom_{k}(Q/pP,(\epsilon)\otimes_{k}P/Q)\colon \deg(u)=0, u\Pi=\Pi u \rbrace.
$$
\end{Lemma}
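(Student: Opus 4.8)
The plan is to identify $T_{\mathcal{D},x}$ with lifts of the $\mathcal{O}_D$-display and then run the usual dévissage for deformations of a filtration. By rigidity of quasi-isogenies --- they lift uniquely along the nilpotent thickening $k[\epsilon]\twoheadrightarrow k$, whose kernel $(\epsilon)$ carries the trivial divided-power structure, compatible with the one on $p\mathbb{Z}_p$ since $p=0$ here --- a tangent vector at $x$ is the same datum as an isomorphism class of lift of $(\mathcal{P},\Pi,\mathbb{Z}/2\mathbb{Z}\text{-}\grad)$ to $k[\epsilon]$, considered up to isomorphism inducing the identity on the reduction. I would then apply Lemma~\ref{Deformationstheorie von SFD-displays} to the pd-thickening $S=k[\epsilon]\twoheadrightarrow R=k$: this turns the problem into classifying lifts of the Hodge-filtration surjections $P_i/I(k)P_i\twoheadrightarrow P_i/Q_i$ together with the maps $L_i\to L_{i+1}$ fitting into the two $\Pi$-diagrams (a) and (b) of that lemma.

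Next come two simplifications specific to the base being the perfect field $k$. First, $I(k)=VW(k)=pW(k)$, so the relevant filtration is the sub-$k$-bundle $Q_i/pP_i\subseteq P_i/pP_i$ with quotient $P_i/Q_i$; this is why $Q/pP$ appears. Second, since the window $\widetilde{\mathcal{P}}$ of Lemma~\ref{Lemma zu relativen SFD Windows} is the canonical crystalline lift, $\widetilde{P}_i/I(k[\epsilon])\widetilde{P}_i$ is canonically $(P_i/pP_i)\otimes_k k[\epsilon]$ and $\Pi$ acts on it as $\bar\Pi\otimes k[\epsilon]$, where $\bar\Pi$ is the reduction of $\Pi$ modulo $pP$: it has degree $+1$, preserves $Q/pP$ because $\Pi(Q)\subseteq Q$, and induces $\Pi$ on $P/Q$. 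With this in hand, the standard deformation theory of sub-bundles gives that lifts of the surjection $(P_i/pP_i)\otimes_k k[\epsilon]\twoheadrightarrow L_i$ --- equivalently, lifts of the submodule $Q_i/pP_i$ inside it --- form a torsor under $\Hom_k(Q_i/pP_i,(\epsilon)\otimes_k P_i/Q_i)$, and I would take the canonical lift $(Q_i/pP_i)\otimes_k k[\epsilon]$ as base point to make this a $k$-linear bijection. Write $u=(u_i)$ for the resulting element of $\Hom_k(Q/pP,(\epsilon)\otimes_k P/Q)$ and $\widetilde{Q}_{u_i}\subseteq (P_i/pP_i)\otimes_k k[\epsilon]$ for the corresponding graph-type submodule.

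It remains to match the remaining structure against the conditions on $u$. Requiring that the total lift $\bigoplus_i\widetilde{Q}_{u_i}$ be $\mathbb{Z}/2\mathbb{Z}$-graded is literally the condition $\deg u=0$. For fixed surjections, a map $L_i\to L_{i+1}$ making diagram (a) commute exists --- and is then uniquely determined --- precisely when $(\bar\Pi\otimes k[\epsilon])(\widetilde{Q}_{u_i})\subseteq\widetilde{Q}_{u_{i+1}}$ for all $i$; since $\bar\Pi$ already carries $Q_i/pP_i$ into $Q_{i+1}/pP_{i+1}$, writing out the graph description of $\widetilde{Q}_{u_i}$ reduces this to $\Pi u=u\Pi$, and the resulting maps automatically reduce to the $\Pi$-maps on $P/Q$, so (b) holds for free (the condition $\Pi^2=\cdot p$ is inherited from $\mathcal{P}$ and imposes nothing new). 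Assembling the bijections over $i\in\mathbb{Z}/2\mathbb{Z}$ yields the asserted description, and $k$-linearity is immediate. I do not expect a genuine obstacle --- the lemma is, as stated, easy to check --- the only thing to be careful about is the bookkeeping of the $\mathbb{Z}/2\mathbb{Z}$-degrees and convincing oneself that the maps $L_i\to L_{i+1}$ contribute no extra parameters once the surjections have been chosen $\Pi$-compatibly.
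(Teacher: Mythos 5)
Your argument is correct, and it supplies exactly the verification the paper omits (the paper merely asserts the lemma is ``easy to check''): rigidity of the quasi-isogeny reduces the tangent space to isomorphism classes of lifts of the graded display, Lemma~\ref{Deformationstheorie von SFD-displays} applied to the trivial pd-thickening $k[\epsilon]\twoheadrightarrow k$ turns these into $\Pi$-compatible graded lifts of the Hodge filtration inside the constant crystal $(P/pP)\otimes_k k[\epsilon]$, and the torsor description of such lifts yields precisely the stated Hom-space with the conditions $\deg(u)=0$ and $u\Pi=\Pi u$. No gaps; this is the intended route.
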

Now the $k$-dimension of this tangent space depends on how many indices of $(\mathcal{P},\Pi,\mathbb{Z}/2\mathbb{Z}-\text{grad})$ are critical.
\begin{Lemma}
\begin{enumerate}
\item[(a):] If only one index is critical for $(\mathcal{P},\Pi,\mathbb{Z}/2\mathbb{Z}-\grad),$ then $\dim_{k}(T_{\mathcal{D},x})=1.$
\item[(b):] If both indices are critical for $(\mathcal{P},\Pi,\mathbb{Z}/2\mathbb{Z}-\grad),$ then $\dim_{k}(T_{\mathcal{D},x})=2.$
\end{enumerate}
\end{Lemma}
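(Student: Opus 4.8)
The plan is to turn the description of $T_{\mathcal D,x}$ from Lemma \ref{Lemma zu Tangentialraumen im Drinfeld Modulproblem} into an explicit $2\times2$ linear system. Both $Q/pP$ and $P/Q$ are $\mathbb Z/2\mathbb Z$-graded $k$-modules on which $\Pi$ acts in degree $+1$ with $\Pi^{2}=\cdot p=0$. Since $P_{i}$ is free of rank $2$ over $W(k)$, since $P_{i}/Q_{i}$ is free of rank $1$ over $k$, and since $pP_{i}\subseteq Q_{i}$, each graded piece $(Q/pP)_{i}=Q_{i}/pP_{i}$ and $(P/Q)_{i}=P_{i}/Q_{i}$ is $1$-dimensional over $k$. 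Fixing generators $\bar q_{i}$ of $(Q/pP)_{i}$ and $\bar\ell_{i}$ of $(P/Q)_{i}$, a degree-$0$ $k$-linear map $u\colon Q/pP\to(\epsilon)\otimes_{k}P/Q$ is just a pair $(c_{0},c_{1})\in k^{2}$ with $u(\bar q_{i})=c_{i}\,\epsilon\otimes\bar\ell_{i}$. Writing the induced actions of $\Pi$ as $\Pi\bar q_{0}=\alpha_{0}\bar q_{1}$, $\Pi\bar q_{1}=\alpha_{1}\bar q_{0}$, $\Pi\bar\ell_{0}=\beta_{0}\bar\ell_{1}$, $\Pi\bar\ell_{1}=\beta_{1}\bar\ell_{0}$ (so that automatically $\alpha_{0}\alpha_{1}=\beta_{0}\beta_{1}=0$), the condition $u\Pi=\Pi u$ unwinds to the two scalar equations
\[
\alpha_{0}c_{1}=\beta_{0}c_{0},\qquad \alpha_{1}c_{0}=\beta_{1}c_{1}.
\]
So the whole question reduces to deciding which of $\alpha_{0},\alpha_{1},\beta_{0},\beta_{1}$ vanish in terms of the critical indices of $(\mathcal P,\Pi,\mathbb Z/2\mathbb Z\text{-}\grad)$.

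By definition of a critical index, $\beta_{i}=0$ if and only if index $i$ is critical. The step I expect to cost the most work is the parallel statement on $Q/pP$: \emph{index $i$ is critical if and only if $\alpha_{i+1}=0$} (indices mod $2$). To prove it I would first record that each $\Pi\colon P_{j}\to P_{j+1}$ is injective (as $\Pi^{2}=\cdot p$) with $W(k)$-cokernel of length exactly $1$ (Remark \ref{Remark zu SFD}(i)), and that $Q_{j}$ also has colength $1$ in $P_{j}$ since $P_{j}/Q_{j}=\Lie(X)_{j}$ is a $1$-dimensional $k$-vector space. Now $\alpha_{i+1}=0$ means exactly $\Pi(Q_{i+1})\subseteq pP_{i}$. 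If index $i$ is critical, then $\Pi(P_{i})\subseteq Q_{i+1}$, and as $\Pi(P_{i})$ and $Q_{i+1}$ both have colength $1$ inside $P_{i+1}$ this forces $\Pi(P_{i})=Q_{i+1}$, whence $\Pi(Q_{i+1})=\Pi^{2}(P_{i})=pP_{i}$. Conversely, $\Pi(Q_{i+1})\subseteq pP_{i}=\Pi(\Pi(P_{i}))$ gives $Q_{i+1}\subseteq\Pi(P_{i})$ by injectivity of $\Pi$ on $P_{i+1}$, and the same colength comparison forces $Q_{i+1}=\Pi(P_{i})$, so $\Pi(P_{i})\subseteq Q_{i+1}$ and index $i$ is critical.

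It then remains only to run the case distinction, recalling from Remark \ref{Remark zu SFD}(iii) that at least one of the two indices is critical. If exactly one index is critical, say index $0$ (the case of index $1$ being symmetric), then $\beta_{0}=0$ and $\alpha_{1}=0$, while $\beta_{1}\neq0$ and $\alpha_{0}\neq0$; the first equation reads $\alpha_{0}c_{1}=0$, forcing $c_{1}=0$, and the second becomes $0=0$, so $T_{\mathcal D,x}=\{(c_{0},0):c_{0}\in k\}$ is $1$-dimensional, which is (a). If both indices are critical, then $\beta_{0}=\beta_{1}=0$ and, by the statement just proved, also $\alpha_{0}=\alpha_{1}=0$, so both equations hold identically and $T_{\mathcal D,x}=k^{2}$ is $2$-dimensional, which is (b). As a cross-check one can instead specialise the explicit families $\mathcal P^{(0)}$ and $\mathcal P^{(0,1)}$ of the basic construction at a $k$-point, with the coordinate nonzero resp.\ zero, and read off the same dimensions directly from the $\Pi$- and Hodge-diagrams displayed there.
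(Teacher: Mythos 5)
Your proposal is correct and follows essentially the same route as the paper: identify when the maps induced by $\Pi$ on the rank-one graded pieces of $Q/pP$ and $P/Q$ vanish in terms of the critical indices (the key point being that $i$ critical forces $\Pi P_{i}=Q_{i+1}$ and hence $\Pi Q_{i+1}=pP_{i}$), and then read off the solution space of $u\Pi=\Pi u$ in each case. You simply spell out, via the colength-one comparison, the equivalence that the paper only asserts in a parenthetical.
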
 
\begin{proof}
First observe that the index $i=0$ is critical if and only if $\Pi\colon Q_{1}/pP_{1}\rightarrow Q_{0}/pP_{0}$ is the zero morphism and the index $i=1$ is critical if and only if $\Pi\colon Q_{0}/pP_{0}\rightarrow Q_{1}/pP_{1}$ is the zero morphism (use that $i$ is critical if and only if $\Pi P_{i}=Q_{i+1}$). It follows that the equation $\Pi u_{i}=u_{i+1} \Pi$ can only be satisfied in the case where $i$ is critical, but not $i+1,$ when $u_{i+1}=0.$ In case both indices are critical, these identities are always satisfied.
\end{proof}
Now one can turn to the proof of Lemma \ref{Lemma Upsilon ist etale}.
\begin{proof}
Let $x\in \widehat{\Omega}^{2}_{\breve{\mathbb{Z}_{p}}}(\Spec(k))$ and $\Upsilon(x)\in \mathcal{D}(k)$ the image. I will write 
$$
\Upsilon(x)=(\mathcal{P}_{0},\Pi,\mathbb{Z}/2\mathbb{Z}-\text{grad},\rho_{0}).
$$ Then I have to show that the natural transformation $\Upsilon$ induces an isomorphism on tangentspaces:
$$
d\Upsilon\colon T_{\widehat{\Omega}^{2}_{\breve{\mathbb{Z}}_{p}},x}\rightarrow T_{\mathcal{D},\Upsilon(x)}.
$$
This morphism is explicitly given as follows: Let $\Delta$ be some simplex, such that $x\in \overline{U}_{\Delta}.$ Then one has that 
$$
T_{\widehat{\Omega}^{2}_{\breve{\mathbb{Z}}_{p}},x}=T_{\bar{U}_{\Delta},x}=\lbrace \alpha=(\alpha_{1},\alpha_{2})\in k[\varepsilon]^{2}\colon 1-\alpha_{i}^{p-1}\in (k[\varepsilon])^{*}, \alpha_{1}\cdot \alpha_{2}=0, \alpha \equiv x (\text{mod})(\varepsilon) \rbrace,
$$
in the case where $\Delta$ is a maximal simplex. Then the image $d\Upsilon(\alpha)$ is the deformation $\mathcal{P}^{\prime}$ of $\mathcal{P}_{x},$ whose Hodge-Filtration is described by the following commutative diagram:
$$
\xymatrix{
P_{0}^{\prime} \ar[r]^{\Pi} \ar[d] & P_{1}^{\prime} \ar[r]^{\Pi} \ar[d] & P_{0}^{\prime} \ar[d] \\
\Lie(P_{0}^{\prime}) \ar[r]^{\cdot \alpha_{1}} & \Lie(P_{1}^{\prime}) \ar[r]^{\cdot \alpha_{2}} & \Lie(P_{0}^{\prime}).
}
$$
I will first make the following observations:
\begin{enumerate}
\item[(a):] Recall that $(\mathcal{P}_{0},\Pi,\mathbb{Z}/2\mathbb{Z}-\text{grad},\rho_{0})$ was the image $\Upsilon(x).$ Assume that the index $i$ is critical for $\mathcal{P}_{0}$ and let $\mathcal{P}^{\prime}$ be some deformation of $\mathcal{P}_{0}$ to $k[\varepsilon].$ Via Lemma \ref{Lemma zu Tangentialraumen im Drinfeld Modulproblem} it corresponds to a pair $u=(u_{1},u_{2}).$ Then the index $i$ is critical for $\mathcal{P}^{\prime}$ if and only if $u_{i+1}=0.$
\item[(b):] Let $\alpha=(\alpha_{1},\alpha_{2})\in T_{\widehat{\Omega}^{2}_{\breve{\mathbb{Z}}_{p}},x}$ and $\mathcal{P}^{\prime}$ be the deformation $d\Upsilon(\alpha)$ of $\mathcal{P}.$ Assume that $i$ is critical for $\mathcal{P}^{\prime}$ (so that $\alpha_{i}=0$). Then $i+1$ is critical if and only if $u_{i}=0$ if and only if $\alpha_{i+1}=0.$ 
\end{enumerate}
Now I start showing that $d\Upsilon$ is an isomorphism. Let me first look at the case where $x$ is not a $\mathbb{F}_{p}$-rational point, which implies that $\Upsilon(x)=(\mathcal{P},\Pi,\mathbb{Z}/d\mathbb{Z}-\text{grad},\rho)$ is such that exactly one index is critical. It follows then that both tangent spaces $T_{\widehat{\Omega}^{2}_{\breve{\mathbb{Z}}_{p}},x}$ and $T_{\mathcal{D},\Upsilon(x)}$ are one-dimensional. Thus it suffices to show that $d\Upsilon$ is not the zero-morphism; this will be a direct calculation:
\\
Let $x$ correspond to the map $\breve{\mathbb{Z}}_{p}[X] \rightarrow k, S\mapsto \zeta$. Then a point of the tangent space to $x$ corresponds to the map $\breve{\mathbb{Z}}_{p}[X] \rightarrow k, S\mapsto \zeta + \epsilon\varrho$. The associated display $\mathcal{P}_{1}$ over $k[\epsilon]$ is given by the structural matrix
$$ \begin{bmatrix}
0 & 0 & [\zeta^{q^{2}}] - [\zeta - \epsilon\varrho]  & 1 \\
0 & 0 & 1 & 0 \\
0 & 1 & 0 & 0 \\
1 & 0 & 0 & 0
\end{bmatrix} $$
Denote by $\mathcal{P}_{0}^{\prime}$ the trivial lift of $\mathcal{P}_{0}$ given by the inclusion $k \rightarrow k[\epsilon]$. One has to calculate the element $u\in T_{\mathcal{P}_{0}}$, such that $\mathcal{P}_{1} = \mathcal{P}_{0}^{\prime} + u$.

Every lift of $\mathcal{P}_{0}$ is of the form $\mathcal{P}_{\alpha}$ for some $\alpha\colon P_{0}\rightarrow \epsilon P_{0}$ (for the definition of these displays, please see \cite[Example 22, pg. 21-22]{ZinkDisplay}). To calculate $u$ one calculates $\alpha$ for the lift $\mathcal{P}_{1}$. One has that $(P_{1},Q_{1})=(P_{0},Q_{0})$, so I just write $P$ and $Q$ instead.

Define $^{F}$-linear maps
$$ g\colon P \longrightarrow W(\epsilon) \otimes_{W(k[\epsilon])} P$$
and
$$ h\colon Q \longrightarrow W(\epsilon) \otimes_{W(k[\epsilon])} P$$
by 
$$ F_{1}(x) = F_{0}(x) - g(x), x \in P,$$
$$ \dot{F}_{1}(y) = \dot{F}_{0}(y) - h(y), y\in Q.$$
Then $\alpha\colon P_{0}\rightarrow \epsilon P_{0}$ is given by the equations $\alpha(F_{0}(x))=g(x)$ and $\alpha(\dot{F}_{0}(y))=h(y)$.
A calculation shows that $\alpha(b_{j})=0$ for all $j\neq 3$ and $\alpha(b_{3})=\epsilon\varrho b_{1}$. 

To determine $u$ I first note that there is a canonical isomorphism $\epsilon P \cong \epsilon \otimes_{k_0} P/\pi P$, then $\alpha$ factorizes through 
$$ \bar{\alpha}: P/\pi P \longrightarrow \epsilon \otimes_{k} P/\pi P$$
and I have
$$ u\colon Q/\pi P \subset P/\pi P\longrightarrow \epsilon \otimes_{k} P/\pi P \longrightarrow \epsilon \otimes_{k} P/Q .$$ Therefore $u\neq 0$.
\\
Next, I consider the case, when $x$ is $\mathbb{F}_{p}$-rational. Then $\Upsilon(x)=(\mathcal{P}_{0},\Pi,\mathbb{Z}/d\mathbb{Z}-\text{grad},\rho)$ is such that both indices are critical and both tangent spaces are $2$-dimensional. The tangentspace $T_{\widehat{\Omega}^{2}_{\breve{\mathbb{Z}}_{p}},x}$ has two $1$-dimensional subspaces given by the conditions $\alpha_{i}=0,$ $i=0,1.$ If one restricts $d\Upsilon$ to one of these $1$-dimensional subspaces, I get an induced morphism to the space of deformations of $\mathcal{P},$ such that $i=0$ resp. $i=1$ is critical for that deformation. By observation (a) above these subspaces of $T_{\mathcal{D},\Upsilon(x)}$ are one-dimensional and by observation (b) this induced morphism is injective, thus an isomorphism. The images of these induced morphisms are distinct in $T_{\mathcal{D},\Upsilon(x)},$ so that $d\Upsilon$ has to be an isomorphism.
\end{proof}


\newpage
\bibliography{/Users/sebastianbartling/Documents/Bibtex/mybib}{}
\bibliographystyle{alpha}
\end{document}